\newtheorem{theorem}{Theorem}[section]
\newtheorem{corollary}[theorem]{Corollary}
\newtheorem{definition}[theorem]{Definition}
\newtheorem{example}[theorem]{Example}
\newtheorem{lemma}[theorem]{Lemma}
\newtheorem{proposition}[theorem]{Proposition}
\newtheorem{remark}[theorem]{Remark}
\begin{document}

\allowdisplaybreaks
%%%%%% TO BE ENTERED BY THE AUTHOR(S)
%%%
%%% ENTER TITLE
\title{Monotone Relations in Hadamard Spaces}

%%% AUTHOR(S) FULL NAMES, AND EMAIL ADDRESSES
\author[affil1]{Ali Moslemipour}
\ead{ali.moslemipour@gmail.com}
%%%
\author[affil2]{Mehdi Roohi}
\ead{m.roohi@gu.ac.ir}
%%% ENTER AUTHOR(S) AFFILIATION(S)
\address[affil1]{Department of Mathematics, Science and Research Branch,\\ Islamic Azad University, Tehran, Iran}
\address[affil2]{Department of Mathematics, Faculty of Sciences,\\
Golestan University, Gorgan, Iran}
%%% AND CORRESPONDINGLY FOR OTHER AUTHORS, IF THERE ARE MORE AUTHORS
%%% ENTER ABBREVIATED AUTHOR(S) NAMES FOR PAGE HEADINGS
\newcommand{\AuthorNames}{A. Moslemipour, M. Roohi}
%%% IF THERE ARE MORE THAN TWO AUTHORS WRITE
%%% \newcommand{\AuthorNames}{First Author et al.}
%%%

%%% ENTER MSC, KEYWORDS, RECEIVED, EDITOR, THANKS FOR FINANCIAL SUPPORT FOR RESEARCH
\newcommand{\FilMSC}{Primary 47H05; Secondary 47H04}
\newcommand{\FilKeywords}{(Monotonicity, geodesic space, flat Hadamard spaces, monotone relations, Lipschitz semi-norm.)}
%\newcommand{\FilCommunicated}{(??????????????????)}
%\newcommand{\FilSupport}{Research supported by ... (optionally)}
%%% If you do not want to thank for the financial support of the research, remove
%%% the previous line (i.e., leave \FilSupport undefined)
%%%%%%%%%%%%%%%%%%%%%%%%%%%%%%%%%%%%%%%%%%%%%%%%%%

\newcommand*{\loze}{{\scalebox{0.55}{$\lozenge$}}}

\vspace{0.3cm}

\begin{abstract}
In this paper,  the notion of $\mathcal{W}$-property for subsets of $X\times X^\loze$ is introduced  and investigated, where $X$ is an Hadamard space and $X^\loze$ is its linear dual space. It is shown that an Hadamard space $X$ is flat if and only if $X\times X^\loze$ has $\mathcal{W}$-property.  Moreover, the notion of monotone relation from an Hadamard space to its linear dual space is introduced. A characterization result for monotone relations with $\mathcal{W}$-property (and hence in flat Hadamard spaces) is given. Finally, a type of Debrunner-Flor Lemma  concerning extension of monotone relations in Hadamard spaces is proved.
\end{abstract}

\maketitle

%%%%%% THIS PART MUST BE PLACED IMMEDIATELY AFTER THE \maketitle COMMAND
%%%%%% BACK TO ORIGINAL FOOTNOTES
\makeatletter
\renewcommand\@makefnmark%
{\mbox{\textsuperscript{\normalfont\@thefnmark)}}}
\makeatother
%%%%%%
\newcommand{\supp}{\mathrm{supp}}
\newcommand{\Ran}{\mathrm{Range}}
\newcommand{\Dom}{\mathrm{Dom}}
\newcommand{\dom}{\mathrm{dom}}
\newcommand{\gra}{\mathrm{gra}}
\newcommand{\spa}{\mathrm{span}}
\newcommand{\Fix}{\mathrm{Fix}}
\newcommand{\co}{\mathrm{co}}
\newcommand{\cat}{\mathrm{CAT}}
\newcommand{\Id}{\mathrm{Id}}
\newcommand{\epi}{\mathrm{epi}}
\newcommand{\Int}{\mathrm{int}}
\def\N {{\mathbb N}}
\def\R {{\mathbb R}}
\newcommand*{\LargerCdot}{\raisebox{-0.95ex}{\scalebox{2.5}{$\cdot$}}}
\newcommand*{\loz}{{\scalebox{0.5}{$\lozenge$}}}
\newcommand*{\tl}{{\scalebox{1.5}{$\,\tilde{\scalebox{0.66}{\!\!{M}}}$}}}
\newcommand*{\tll}{\raisebox{-0.95ex}{\scalebox{2}{$\,\tilde{\scalebox{0.5}{\!\!\!$\mathfrak{M}$}}$}}}
\newcommand*{\shp}{{\scalebox{0.55}{$\#$}}}

\vspace{0.3cm}

\section{Introduction and Preliminaries}
Let $(X,d)$ be a metric space. We say that a mapping  $c:[0,1] \rightarrow X$ is a \textit{geodesic path} from $x \in X$ to $y \in X$ if  $c(0)=x$, $c(1)=y$ and $d(c(t),c(s))=|t-s|d(x,y),$ for each $t,s \in [0,1]$.
The image of $c$ is said to be a \textit{geodesic segment} joining $x$ and $y$.
A metric space $(X,d)$ is called a \textit{geodesic space} if there is a geodesic path  between every two points of $X$.  Also, a geodesic space $X$ is called \textit{uniquely geodesic space} if for each $x, y\in X$ there exists a unique geodesic path from $x$ to $y$. From now on, in a uniquely geodesic space, we denote the set $c([0,1])$ by $[x,y]$ and for each $z \in [x,y]$, we write $z=(1-t)x \oplus ty$, where $t\in [0,1]$. In this case, we say that $z$ is a \textit{convex combination} of $x$ and $y$. Hence, $[x,y]=\{(1-t)x \oplus ty: t \in [0,1]\}$. More details can be found in \cite{Bacak2014, BridsonHaefliger}.

\begin{definition}{\rm{\rm\cite[Definition 2.2]{DhompongsaKaewkhaoPanyanak2012}}} {\rm Let $(X,d)$ be a geodesic space, $v_1,v_2,v_3,\ldots,v_n$ be $n$ points in $X$ and $ \{\lambda_1,\lambda_2,\lambda_3,\ldots,\lambda_n\} \subseteq (0,1)$ be such that $\sum^n_{i=1}\lambda_i=1$. We  define \textit{convex combination} of $ \{v_1,v_2,v_3,\ldots,v_n\}$ inductively as following:
 \begin{align}\label{cnv}
\scalebox{1.5}{$\oplus$}^n_{i=1}\lambda_i v_i:=(1-\lambda_n)\bigg(\frac{\lambda_1}{1-\lambda_n}v_1\oplus\frac{\lambda_2}{1-\lambda_n}v_2\oplus\cdots\oplus\frac{\lambda_{n-1}}{1-\lambda_n}v_n\bigg)\oplus\lambda_nv_n.
\end{align}
Note that for every $x \in X$, we have $d \big(x,\scalebox{1.5}{$\oplus$} ^n_{i=1}\lambda_i v_i\big)\leq \sum_{i=1}^n \lambda_i d(x,v_i)$.
}\end{definition}

According to  \cite[Definition 1.2.1]{Bacak2014}, a geodesic space $(X,d) $ is a \textit{$\cat(0)$ space}, if  the following condition, so-called \textit{CN-inequality}, holds:
\begin{align} \label{cn}
d(z,(1-\lambda)x\oplus \lambda y)^2 \leq (1-\lambda)d(z,x)^2 +\lambda d(z,y)^2-\lambda(1-\lambda)d(x,y)^2 ~\text{for all}~x,y,z \in X, \lambda \in [0,1].
\end{align}
One can show that (for instance see  {\rm{\rm\cite[Theorem 1.3.3]{Bacak2014}}}) $\cat(0)$ spaces are uniquely geodesic spaces.
An \textit{Hadamard space} is a complete $\cat(0)$ space.

Let $X$ be an Hadamard space. For each $x, y\in X$, the ordered pair $(x, y)$ is called a \textit{bound vector} and is denoted by $\overrightarrow{xy}$. Indeed, $X^2=\{\overrightarrow{xy}: x,y \in X\}$.  For each $x \in X$, we apply  $\mathbf{0}_x:=\overrightarrow{xx}$ as \textit{zero bound vector} at $x$ and $-\overrightarrow{xy}$ as the bound vector $\overrightarrow{yx}$. The bound vectors $\overrightarrow{xy}$ and $\overrightarrow{uz}$ are called \textit{admissible} if $y=u$. Therefore the sum of two admissible bound vectors $\overrightarrow{xy}$ and $\overrightarrow{yz}$ is defined by $\overrightarrow{xy}+\overrightarrow{yz}=\overrightarrow{xz}$.
 Ahmadi Kakavandi and Amini in \cite{KakavandiAmini} have introduced the \textit{dual space} of an Hadamard space, by using the concept of quasilinearization of abstract metric spaces presented by Berg and Nikolaev  in \cite{BergNikolaev}. The \textit{quasilinearization map} is defined as following:
\begin{align}\label{inb}
\langle \cdot,\cdot\rangle &: X^2 \times X^2 \rightarrow \mathbb{R}\\ \nonumber
\langle \overrightarrow{ab}, \overrightarrow{cd} \rangle & :=\frac{1}{2}\big\{d(a,d)^2+d(b,c)^2-d(a,c)^2-d(b,d)^2\big \}; ~a,b,c,d \in X.
\end{align}
Let $x,\,y \in X$, we define the mapping $\varphi_{\overrightarrow{xy}}: X\rightarrow \mathbb{R}$ by $\varphi_{\overrightarrow{xy}}(z)=\frac{1}{2} (d(x,z)^2-d(y,z)^2 )$; for each $z\in X$.
%\begin{align} \label{fic1}
%\varphi_{\overrightarrow{xy}} &: X\rightarrow \mathbb{R}\\ \nonumber
    %                                    & z \mapsto \frac{1}{2} \big(d(x,z)^2-d(y,z)^2 \big).%
%\end{align}
We will see that  $\varphi_{\overrightarrow{xy}}$ possess attractive properties that simplify some calculations. We observe that \eqref{inb} can be rewritten as following:
\begin{equation*}% \label{boundfi}
\langle \overrightarrow{ab}, \overrightarrow{cd} \rangle=\varphi_{\overrightarrow{cd}}(b)-\varphi_{\overrightarrow{cd}}(a)=\varphi_{\overrightarrow{ab}}(d)-\varphi_{\overrightarrow{ab}}(c).
\end{equation*}
The metric space $(X,d)$ satisfies the \textit{Cauchy-Schwarz inequality} if
\[\langle \overrightarrow{ab}, \overrightarrow{cd} \rangle \leq d(a,b)d(c,d)~~\text{for all} ~a,b,c,d \in X.\]
This inequality characterizes $\cat(0)$ spaces. Indeed, it follows from \cite[Corollary 3]{BergNikolaev} that  a geodesic space $(X,d)$ is a $\cat(0)$ space if  and only if it satisfies in the Cauchy-Schwarz inequality.
For an Hadamard space $(X,d)$, consider the mapping
\begin{align*}
\Psi &:\mathbb{R} \times X^2 \rightarrow C(X,\mathbb{R})\\
& (t,a,b) \mapsto\Psi  (t,a,b)x=t \langle \overrightarrow{ab}, \overrightarrow{ax} \rangle; ~a,b,x \in X , t \in \mathbb{R},
\end{align*}
where $ C(X,\mathbb{R}) $ denotes the space of all continuous real-valued functions on $\mathbb{R} \times X^2 $. It follows from Cauchy-Schwarz inequality that $  \Psi  (t,a,b) $ is a Lipschitz function with Lipschitz semi-norm
\begin{equation}\label{lips22}
L(\Psi (t,a,b) )=|t|d(a,b) ,~ \text{for all}~ a,b \in X,~\text{and all}~ t \in \mathbb{R},
\end{equation}
where  the \textit{Lipschitz semi-norm} for any function $\varphi : (X, d) \rightarrow \mathbb{R} $ is defined by
\[ L(\varphi)=\sup\bigg\{ \frac{\varphi(x)-\varphi(y)}{d(x,y)}:x,y \in X, x\neq y\bigg\}.\]
 A \textit{pseudometric $D$ on $\mathbb{R} \times X^2$ induced by the Lipschitz semi-norm} \eqref{lips22}, is defined by
\[D((t,a,b),(s,c,d))=L(\Psi (t,a,b)-\Psi (s,c,d));~ a,b,c,d \in X,  t,s \in \mathbb{R}.\]
For an Hadamard space $(X,d)$, the pseudometric space $ (\mathbb{R} \times X^2,D)$ can be considered as a subspace of the pseudometric space of all real-valued Lipschitz functions Lip$(X,\mathbb{R})$. Note that, in view of \cite[Lemma 2.1]{KakavandiAmini}, $ D((t,a,b),(s,c,d))=0 ~\text{if and only if} ~t\langle \overrightarrow{ab},\overrightarrow{xy} \rangle =s\langle \overrightarrow{cd},\overrightarrow{xy} \rangle~\text{for all}~ x,y \in X $. Thus, $D$ induces an equivalence relation on $\mathbb{R} \times X^2$, where the equivalence class of $(t,a,b) \in\mathbb{R} \times X^2 $ is
\[[t\overrightarrow{ab}]=\{s\overrightarrow{cd}: s\in\mathbb{R}, c,d\in X, D((t,a,b),(s,c,d))=0\}. \]
The \textit{dual space} of an Hadamard space $(X,d)$, denoted by $X^*$, is the set of all equivalence classes  $[t\overrightarrow{ab}]$ where $(t,a,b) \in \mathbb{R} \times X^2$,  with the metric $D([t\overrightarrow{ab}],[s\overrightarrow{cd}]):=D((t,a,b),(s,c,d))$.  Clearly, the definition of equivalence classes implies that $[\overrightarrow{aa}]=[\overrightarrow{bb}]$ for all $a,b \in X$. The \textit{zero element} of $X^*$ is  $\mathbf{0}:=[t\overrightarrow{aa}]$, where $a \in X$ and $t \in \mathbb{R}$ are arbitrary. It is easy to see that the evaluation $\langle \mathbf{0}, \cdot \rangle$ vanishes for any bound vectors in $X^2$. Note that in general $X^*$ acts on $X^2$ by
\[ \langle x^*,\overrightarrow{xy} \rangle =t \langle \overrightarrow{ab}, \overrightarrow{xy} \rangle,~\text{where}~x^*=[t\overrightarrow{ab}] \in X^*~\text{and}~ \overrightarrow{xy} \in X^2.\]
The following notation will be used throughout this paper.
\[\Big \langle \sum_{i=1}^n\alpha_ix_i^*,\overrightarrow{xy} \Big \rangle:=\sum_{i=1}^n\alpha_i\langle x_i^*,\overrightarrow{xy}\rangle,~\alpha_i \in \mathbb{R},\, x_i^* \in X^*,\, n\in \mathbb{N},\, x, y\in X.\]
For an Hadamard space $(X,d)$, Chaipunya and Kumam in \citep{ChaipunyaKumam}, defined the \textit{linear dual space} of $X$  by
\[X^\loze=\bigg \{\sum_{i=1}^n\alpha_i x_i^* :\alpha_i \in \mathbb{R},\, x_i^* \in X^*,\, n \in \mathbb{N}\bigg \}.\]
Therefore, $X^\loze={\spa}\, X^*$. It is easy to see that $X^\loze$ is a normed space with the norm $\|x^\loz\|_{\loz}=L(x^\loz)$ for all $x^\loz \in X^\loze$. Indeed:
\begin{lemma} {\rm\cite[Proposition 3.5]{ZamaniRaeisi}}\label{dg} Let $X$ be an Hadamard space with linear dual space $X^\loze$. Then \begin{align*}
\|x^\loz \|_{\loz}:=\sup\Bigg\{\frac{\big|\langle x^\loz,\overrightarrow{ab}\rangle-\langle x^\loz,\overrightarrow{cd}\rangle\big|}{d(a,b)+d(c,d)}: a,b,c,d \in X, (a,c)\neq (b,d) \Bigg\},
\end{align*}
is a norm on $X^\loze$. In particular, $\|[t\overrightarrow{ab}]\|_{\loz}=|t|d(a,b)$.
\end{lemma}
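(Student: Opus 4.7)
The plan is to identify the definition $\|x^\loz\|_{\loz} = L(x^\loz)$ with the Lipschitz seminorm of an explicit real-valued function on $X$, and then to read off both the displayed four-point formula and the norm axioms from that identification.

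For $x^\loz = \sum_{i=1}^n \alpha_i [t_i \overrightarrow{a_i b_i}] \in X^\loze$, I would introduce the auxiliary function $h_{x^\loz} : X \to \mathbb{R}$ defined by $h_{x^\loz}(z) := \sum_{i=1}^n \alpha_i t_i \varphi_{\overrightarrow{a_i b_i}}(z)$. Using the identity $\langle \overrightarrow{ab},\overrightarrow{cd}\rangle = \varphi_{\overrightarrow{ab}}(d) - \varphi_{\overrightarrow{ab}}(c)$ recorded after \eqref{inb} together with bilinearity of the action of $X^\loze$ on $X^2$, one obtains
\[
\langle x^\loz, \overrightarrow{xy}\rangle = h_{x^\loz}(y) - h_{x^\loz}(x),\qquad x,y \in X.
\]
The $D=0$ equivalence on $\mathbb{R}\times X^2$ pins $h_{x^\loz}$ down up to an additive constant, so the Lipschitz seminorm $L(h_{x^\loz})$ depends only on the class $x^\loz \in X^\loze$, and it follows that $L(x^\loz) = \sup_{x \neq y} |\langle x^\loz,\overrightarrow{xy}\rangle|/d(x,y)$.

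Denote the right-hand side of the displayed formula by $N(x^\loz)$. For $N(x^\loz) \geq L(x^\loz)$, I would specialize $c=d$ with $a \neq b$; since $\langle x^\loz,\overrightarrow{cc}\rangle = 0$ (each summand vanishes) and $d(c,d)=0$, the ratio collapses to $|\langle x^\loz,\overrightarrow{ab}\rangle|/d(a,b)$, and the supremum over such pairs already exhausts $L(x^\loz)$. For the reverse inequality, the representation above yields
\[
\langle x^\loz, \overrightarrow{ab}\rangle - \langle x^\loz,\overrightarrow{cd}\rangle = [h_{x^\loz}(b) - h_{x^\loz}(a)] - [h_{x^\loz}(d) - h_{x^\loz}(c)],
\]
so the triangle inequality together with the Lipschitz bound on $h_{x^\loz}$ gives $|\langle x^\loz,\overrightarrow{ab}\rangle - \langle x^\loz,\overrightarrow{cd}\rangle| \leq L(x^\loz)(d(a,b)+d(c,d))$, whence $N(x^\loz) \leq L(x^\loz)$ and equality holds.

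The norm axioms on $X^\loze$ transfer directly from those of $L$: absolute homogeneity and the triangle inequality are immediate from the supremum form and linearity of the action, while definiteness uses that $L(x^\loz) = 0$ forces $\langle x^\loz,\overrightarrow{xy}\rangle = 0$ for every pair, and then \cite[Lemma 2.1]{KakavandiAmini} identifies $x^\loz$ with the zero element of $X^\loze$. For the \emph{in particular} claim, Cauchy-Schwarz gives $|t\langle \overrightarrow{ab},\overrightarrow{xy}\rangle| \leq |t|\,d(a,b)\,d(x,y)$, hence $\|[t\overrightarrow{ab}]\|_{\loz} \leq |t|d(a,b)$, and the choice $x=a$, $y=b$ forces equality via $\langle \overrightarrow{ab},\overrightarrow{ab}\rangle = d(a,b)^2$. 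The main subtlety I anticipate is verifying that $h_{x^\loz}$ is well-defined on equivalence classes (i.e., that different representatives of the same element of $X^\loze$ yield functions differing only by an additive constant), so that $L$ genuinely descends to $X^\loze$; once that is settled, the remainder is book-keeping.
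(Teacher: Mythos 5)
The paper does not prove this lemma; it is imported verbatim from \cite[Proposition 3.5]{ZamaniRaeisi}, so there is no in-paper argument to compare against. Judged on its own, your proof is correct and self-contained. The key identification $\langle x^\loz,\overrightarrow{xy}\rangle=h_{x^\loz}(y)-h_{x^\loz}(x)$ with $h_{x^\loz}=\sum_i\alpha_i t_i\varphi_{\overrightarrow{a_ib_i}}$ is exactly the right device: it makes the two-sided estimate $N(x^\loz)=L(x^\loz)$ a one-line telescoping computation (take $c=d$ for $N\ge L$, which is admissible since $a\neq b$ still satisfies $(a,c)\neq(b,d)$; use the triangle inequality for $N\le L$), and your observation that the constraint $(a,c)\neq(b,d)$ is precisely what rules out $d(a,b)+d(c,d)=0$ is the right sanity check on the denominator. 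Two minor points worth making explicit if you write this up: first, the well-definedness you flag at the end is settled by \cite[Lemma 2.1]{KakavandiAmini} exactly as you say — two representatives of the same $x^\loz$ produce functions $h$ with identical increments, hence equal Lipschitz seminorms; second, the definiteness axiom is only as strong as the identification built into $X^\loze=\spa X^*$, namely that an element with vanishing action on all bound vectors \emph{is} the zero element $\mathbf{0}$ — this is a convention of the construction rather than something you can derive, so it is worth stating that you are using it. The \emph{in particular} computation via Cauchy--Schwarz and $\langle\overrightarrow{ab},\overrightarrow{ab}\rangle=d(a,b)^2$ is standard and correct.
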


%\begin{definition} {\rm{\rm\cite[Definition 2.4]{KakavandiAmini}}} {\rm Let $\{x_n\}$ a sequence in an Hadamard space $X$. The sequence $\{x_n\}$ is said to be \textit{weakly convergent to $x\in X$} if $\lim_{n \rightarrow \infty} \langle \overrightarrow{xx_n},\overrightarrow{xy} \rangle =0$, for all $y \in X$, in which case it is denoted by
%$w-\lim_{n\rightarrow \infty} x_n=x$.
%}\end{definition}
%\begin{lemma} {\rm{\rm\cite[Proposition 3.6]{ZamaniRaeisi}}} \label{loz1}Let $\{x_n\} $ be a bounded sequence in  an Hadamard space $(X,d)$ with dual space $X^*$ and let $\{x^\loz_n\}$ be a sequence in $X^{\loze}$. If  $\{x_n\}$ is  weakly convergent to $x$ and $x^\loz_n\rightarrow x^\loz$, then $\langle x^\loz_n,\overrightarrow{x_nz}\rangle \rightarrow \langle x^\loz,\overrightarrow{xz}\rangle $,  for all $z\in X$.
%\end{lemma}

\section{Flat Hadamard Spaces and $\mathcal{W}$-property}
Let $M$ be a relation from $X$ to $X^\loze$; i.e., $M\subseteq X\times X^\loze$. The \textit{domain} and \textit{range} of $M$ are defined, respectively, by
\begin{equation*}
\Dom(M):=\big\{ x\in X : \exists\, x^\loz\in X^\loze ~\text{such that}~ (x, x^\loz)\in M\big\},
\end{equation*}
and
\begin{equation*}
\Ran(M):=\big\{ x^\loz\in X^\loze : \exists\, x\in X ~\text{such that}~ (x, x^\loz)\in M\big\}.
\end{equation*}

\begin{definition}{\rm \label{zsd} Let $X$ be an Hadamard space with linear dual space $X^\loze$. We say that $ M \subseteq X \times X^\loze$ satisfies} \text{$\mathcal{W}$-property} {\rm if there exists $p\in X$ such that the following holds: }
\begin{equation*}%\label{EqWp}
\big \langle x^\loz,\overrightarrow{p((1-\lambda)x_1\oplus \lambda x_2)} \big \rangle \leq(1-\lambda) \langle x^\loz,\overrightarrow{px_1}\rangle +\lambda \langle x^\loz,\overrightarrow{px_2} \rangle ,~\forall \lambda \in [0,1],\, \forall x^\loz \in\Ran(M),\, \forall x_1,x_2 \in\Dom(M).
\end{equation*}
\end{definition}
\begin{proposition}\label{inq2}
Let $X$ be an Hadamard space with linear dual space $X^\loze$ and let $ M \subseteq X \times X^\loze$. Then the following statements are equivalent:
\begin{description}
\item[(i)] $ M \subseteq X \times X^\loze$ satisfies the $\mathcal{W}$-property for some $p\in X$.
\item[(ii)] $ M \subseteq X \times X^\loze$ satisfies the $\mathcal{W}$-property for any $q\in X$.
\item[(iii)] For any $q\in X$,
\begin{align} \label{wgeneral}
\big\langle x^\loze,\overrightarrow{q(\oplus_{i=1}^n\lambda_ix_i)}\big \rangle \leq \sum_{i=1}^n\lambda_i\langle x^\loz,\overrightarrow{qx_i} \rangle,~ \text{for all}~ x^\loz \in\Ran(M), \{x_i\}_{i=1}^n \subseteq \Dom(M), \{\lambda_i\}_{i=1}^n \subseteq [0,1].\tag{$\mathcal{W}_n(q)$}
\end{align}
\item[(iv)]  For some $p\in X$,  $(\mathcal{W}_n(p))$ holds.
\end{description}
\end{proposition}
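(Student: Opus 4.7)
My plan is to prove the cycle (i) $\Leftrightarrow$ (ii) $\Rightarrow$ (iii) $\Rightarrow$ (iv) $\Rightarrow$ (i), driven by a single base-change identity. As a preliminary I would extract
\begin{equation*}
\langle x^\loz, \overrightarrow{qz}\rangle - \langle x^\loz, \overrightarrow{pz}\rangle = \langle x^\loz, \overrightarrow{qp}\rangle, \qquad p,q,z \in X,\ x^\loz \in X^\loze,
\end{equation*}
which follows by writing $x^\loz$ on the generators $[t\overrightarrow{ab}]$, substituting $\langle [t\overrightarrow{ab}], \overrightarrow{xy}\rangle = t\bigl(\varphi_{\overrightarrow{ab}}(y) - \varphi_{\overrightarrow{ab}}(x)\bigr)$, and extending by linearity in the first slot. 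The decisive fact is that the right-hand side does not depend on $z$, so any $z$-indexed linear combination of such pairings shifts by the same additive constant when the base point moves from $p$ to $q$.

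That identity makes (i) $\Leftrightarrow$ (ii) immediate: both sides of the binary $\mathcal{W}$-inequality shift by the common constant $\langle x^\loz, \overrightarrow{qp}\rangle$, so one base point suffices to enforce the inequality at every base point. The implications (iii) $\Rightarrow$ (iv) (specialize $q = p$) and (iv) $\Rightarrow$ (i) (take $n = 2$) are trivial, leaving (ii) $\Rightarrow$ (iii) as the real content.

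For (ii) $\Rightarrow$ (iii) I would induct on $n$. The base $n = 2$ is (ii). For the inductive step the recursive definition \eqref{cnv} gives $\oplus_{i=1}^n \lambda_i x_i = (1-\lambda_n)\, y_{n-1} \oplus \lambda_n x_n$ with $y_{n-1} := \oplus_{i=1}^{n-1} \tfrac{\lambda_i}{1-\lambda_n} x_i$. I would apply the binary $\mathcal{W}$-inequality to bound $\bigl\langle x^\loz, \overrightarrow{q(\oplus_{i=1}^n \lambda_i x_i)}\bigr\rangle$ by $(1-\lambda_n)\langle x^\loz, \overrightarrow{q y_{n-1}}\rangle + \lambda_n \langle x^\loz, \overrightarrow{q x_n}\rangle$, then invoke the inductive hypothesis $(\mathcal{W}_{n-1}(q))$ on the first term, and finally collect coefficients to produce $\sum_{i=1}^n \lambda_i \langle x^\loz, \overrightarrow{q x_i}\rangle$.

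The main obstacle I foresee is precisely the first step of that inductive stage: the intermediate point $y_{n-1}$ is a convex combination of elements of $\Dom(M)$ but need not lie in $\Dom(M)$, so the binary $\mathcal{W}$-inequality is not immediately licensed for the pair $(y_{n-1}, x_n)$ by Definition~\ref{zsd}. To push through I would call on the base-change identity in a second role: by shifting the base point of the binary inequality to $y_{n-1}$, the $y_{n-1}$-terms on both sides collapse to additive constants, and the surviving inequality lives on points of $\Dom(M)$ together with a single geodesic parameter, letting the inductive hypothesis on $n-1$ points of $\Dom(M)$ apply. Keeping clean track of those shifts is where I would be most careful.
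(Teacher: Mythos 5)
Your overall architecture---the cycle (i) $\Leftrightarrow$ (ii) $\Rightarrow$ (iii) $\Rightarrow$ (iv) $\Rightarrow$ (i), the base-change identity $\langle x^\loz,\overrightarrow{qz}\rangle=\langle x^\loz,\overrightarrow{qp}\rangle+\langle x^\loz,\overrightarrow{pz}\rangle$ driving (i) $\Rightarrow$ (ii), and the induction on $n$ via the recursion \eqref{cnv} for (ii) $\Rightarrow$ (iii)---is exactly the paper's proof. The obstacle you flag in the inductive step is genuine: the paper applies the binary $\mathcal{W}$-inequality to the pair $(y_{n-1},x_n)$ with $y_{n-1}=\oplus_{i=1}^{n-1}\tfrac{\lambda_i}{1-\lambda_n}x_i$ without comment, even though Definition \ref{zsd} only licenses that inequality when both endpoints of the geodesic lie in $\Dom(M)$, and $y_{n-1}$ need not. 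So you have correctly identified a step that the paper's own argument glosses over.

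However, your proposed repair does not close the gap. The base-change identity moves the base point $q$ of the bound vectors $\overrightarrow{qz}$; it has no effect on which geodesic the binary inequality is being invoked along. What the first step of the induction needs is convexity of $z\mapsto\langle x^\loz,\overrightarrow{qz}\rangle$ along the geodesic joining $y_{n-1}$ to $x_n$, and no choice of base point turns that into a statement about a geodesic with both endpoints in $\Dom(M)$: taking the base point to be $y_{n-1}$ merely yields $\langle x^\loz,\overrightarrow{y_{n-1}((1-\lambda)x_1\oplus\lambda x_2)}\rangle\le(1-\lambda)\langle x^\loz,\overrightarrow{y_{n-1}x_1}\rangle+\lambda\langle x^\loz,\overrightarrow{y_{n-1}x_2}\rangle$ for $x_1,x_2\in\Dom(M)$, which says nothing about the point $(1-\lambda_n)y_{n-1}\oplus\lambda_n x_n$. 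The implication (ii) $\Rightarrow$ (iii) is unproblematic when $\Dom(M)=X$ (in particular for $M=X\times X^\loze$, the case used in Proposition \ref{frtg}); for a general $M$ one would need either to strengthen the $\mathcal{W}$-property so the inequality holds for all points in the geodesic convex hull of $\Dom(M)$, or to supply a separate argument that the binary inequality propagates to such intermediate points. As written, both your sketch and the paper's proof leave this step unjustified.
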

\begin{proof}~\begin{description}
\item[(i) $\Rightarrow$ (ii):] Let $q\in X$ be any arbitrary element of $X$, $\lambda \in [0,1]$,\, $x^\loz \in\Ran(M)$, and $x_1,x_2 \in\Dom(M)$. Then
\begin{align*}
\big \langle x^\loz,\overrightarrow{q((1-\lambda)x_1\oplus \lambda x_2)} \big \rangle &=\big \langle x^\loz,\overrightarrow{qp}+\overrightarrow{p((1-\lambda)x_1\oplus \lambda x_2)} \big \rangle \\
&= \langle x^\loz,\overrightarrow{qp} \rangle +\big \langle x^\loz,\overrightarrow{p((1-\lambda)x_1\oplus \lambda x_2)} \big \rangle\\
&\leq(1-\lambda  )(\langle x^\loz,\overrightarrow{qp}\rangle + \langle x^\loz,\overrightarrow{px_1}\rangle)+\lambda(\langle x^\loz,\overrightarrow{qp}\rangle+\langle x^\loz,\overrightarrow{px_2} \rangle)\\
&=(1-\lambda  )\langle x^\loz,\overrightarrow{qp} +\overrightarrow{px_1} \rangle +\lambda \langle x^\loz,\overrightarrow{qp}+\overrightarrow{px_2}  \rangle\\
&=(1-\lambda  )\langle x^\loz,\overrightarrow{qx_1} \rangle +\lambda\langle x^\loz,\overrightarrow{qx_2} \rangle,
\end{align*}
as required.
\item[(ii) $\Rightarrow$ (iii):] We proceed by induction on $n$. By Definition \ref{zsd}  the claim is true for $n=2$. Now assume that   ($\mathcal{W}_{n-1}(q)$) is true. In view of equation \eqref{cnv},
\begin{align*}
\big \langle x^\loze,\overrightarrow {q(\oplus_{i=1}^n\lambda_ix_i)}\big \rangle &=\Big \langle x^\loz,\overrightarrow{q((1-\lambda_n)\Big(\frac{\lambda_1}{1-\lambda_n}x_1\oplus \frac{\lambda_2}{1-\lambda_n}x_2\oplus \cdots \oplus\frac{\lambda_{n-1}}{1-\lambda_n}x_{n-1}\Big)\oplus\lambda_nx_n)}\Big \rangle\\
&\leq (1-\lambda_n)\Big \langle x^\loz,\overrightarrow{q \Big(\frac{\lambda_1}{1-\lambda_n}x_1\oplus \frac{\lambda_2}{1-\lambda_n}x_2 \oplus \cdots \oplus \frac{\lambda_{n-1}}{1-\lambda_n}x_{n-1}}\Big)\Big \rangle+\lambda_n\langle x^\loz,\overrightarrow{qx_n} \rangle \\
&\leq(1-\lambda_n)\sum_{i=1}^n\frac{\lambda_i}{1-\lambda_n}\langle x^\loz,\overrightarrow{qx_i}\rangle+\lambda_n\langle x^\loz,\overrightarrow{qx_n} \rangle\\
&=\sum_{i=1}^{n-1}\lambda_i\langle x^\loz,\overrightarrow{qx_i} \rangle+\lambda_n\langle x^\loz,\overrightarrow{qx_n} \rangle\\
&=\sum_{i=1}^n\lambda_i\langle x^\loz,\overrightarrow{qx_i} \rangle.
\end{align*}
\item[(iii) $\Rightarrow$ (iv):] Clear.
\item[(iv) $\Rightarrow$ (i):]  Take $n=2$ in $(\mathcal{W}_n(p))$.
\end{description}
We are done.
\end{proof}
\begin{remark}
It should be noticed that Proposition \ref{inq2} implies that $\mathcal{W}$-property is independent of the choice of the element $p \in X$.
\end{remark}
\begin{definition}{\rm{\rm\cite[Definition 3.1]{MovahediBehmardiSoleimani-Damaneh}}
An Hadamard space $(X,d)$ is said to be \textit{flat} if equality holds in the CN-inequality, i.e., for each $x,y\in X$ and $\lambda \in [0,1]$, the following holds:
\begin{align*}
d(z,(1-\lambda)x\oplus \lambda y)^2=(1-\lambda)d(z,x)^2 +\lambda d(z,y)^2-\lambda(1-\lambda)d(x,y)^2, ~ \textit{for all}  ~z\in X.
\end{align*}
}\end{definition}
%\begin{lemma} {\rm{\rm\cite[Theorem 3.2]{MovahediBehmardiSoleimani-Damaneh}}}\label{flateq} An Hadamard space $(X,d)$ is flat if and only if for each $x,y\in X$ and $\lambda \in [0,1]$, we have:
%\[\langle \overrightarrow{x((1-\lambda)x\oplus \lambda y)},\overrightarrow{ab} \rangle=\lambda \langle \overrightarrow{xy},\overrightarrow{ab} \rangle,~ \textit{for all}  ~a,b\in X.\]
%\end{lemma}
%\begin{remark}\label{wsf}
%Let $X$ ba an Hadamard space with the linear dual $X^\loze$ and let  $M_1\subseteq M_2\subseteq X\times X^\loze$. If $M_2$ has $\mathcal{W}$-property, then $M_1$ has $\mathcal{W}$-property too. In view of Proposition \ref{frtg}, if $X$ is a flat Hadamard space, then any subset of $X\times X^\loze$ has $\mathcal{W}$-property.
%\end{remark}

\begin{proposition}  \label{frtg} Let $X$ be an Hadamard space. The following statements are equivalent:
\begin{description}
\item[{(i)}] $X$ is a flat Hadamard space.

\item[{(ii)}] $\langle \overrightarrow{x((1-\lambda)x\oplus \lambda y)},\overrightarrow{ab} \rangle=\lambda \langle \overrightarrow{xy},\overrightarrow{ab} \rangle$, for all $a,b,x,y\in X$ and all $\lambda\in[0,1]$.

\item[{(iii)}] $X\times X^\loze$ has $\mathcal{W}$-property.

\item[{(iv)}] Any subset of $X\times X^\loze$ has $\mathcal{W}$-property.

\item[{(v)}] For each $p,z \in X$, the mapping $\varphi_{\overrightarrow{pz}}$ is convex.

\item[{(vi)}] For each $p,z \in X$, the mapping $\varphi_{\overrightarrow{pz}}$ is affine, in the sense that:
\begin{align*}
\varphi_{\overrightarrow{pz}}((1-\lambda)x\oplus \lambda y)=(1-\lambda)\varphi_{\overrightarrow{pz}}(x)+\lambda\varphi_{\overrightarrow{pz}}(y),~\forall\,x, y\in X, \forall\,\lambda\in[0,1].
\end{align*}

\end{description}
\end{proposition}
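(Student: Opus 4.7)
The plan is to close two cycles that both pass through flatness:
\[ \text{(i)} \Rightarrow \text{(ii)} \Rightarrow \text{(iv)} \Rightarrow \text{(iii)} \Rightarrow \text{(i)},\qquad \text{(i)} \Rightarrow \text{(vi)} \Rightarrow \text{(v)} \Rightarrow \text{(vi)} \Rightarrow \text{(i)}. \]
This organisation isolates flatness as the hub and reduces the work to four substantive steps: the ``directly from flatness'' expansions (i)$\Rightarrow$(ii) and (i)$\Rightarrow$(vi), the lift (ii)$\Rightarrow$(iv) to arbitrary dual elements, and the critical reverse implication (iii)$\Rightarrow$(i).

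For (i)$\Rightarrow$(ii) and (i)$\Rightarrow$(vi) I would plug the geodesic point $m := (1-\lambda)x\oplus\lambda y$ into the quasilinearisation formula and apply the flatness equality to every appearance of $d(m,\cdot)^2$. In (i)$\Rightarrow$(ii), two applications of flatness (to $d(m,a)^2$ and $d(m,b)^2$) make the $\lambda(1-\lambda)d(x,y)^2$ terms cancel and collapse the expression to $\lambda\langle\overrightarrow{xy},\overrightarrow{ab}\rangle$. The same mechanism applied to $\varphi_{\overrightarrow{pz}}(m)=\tfrac12(d(p,m)^2-d(z,m)^2)$ produces exact affinity, giving (i)$\Rightarrow$(vi). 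The implications (vi)$\Rightarrow$(v) and (iv)$\Rightarrow$(iii) are immediate; (v)$\Rightarrow$(vi) follows from the identity $\varphi_{\overrightarrow{zp}} = -\varphi_{\overrightarrow{pz}}$, which turns convexity of every $\varphi_{\overrightarrow{pz}}$ into convexity of $-\varphi_{\overrightarrow{pz}}$ as well, hence into affinity; and (vi)$\Rightarrow$(i) comes by specialising $p=x$ in the affinity identity and using $d(x,m)=\lambda d(x,y)$.

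For (ii)$\Rightarrow$(iv), by linearity of $\langle\cdot,\overrightarrow{pq}\rangle$ it suffices to verify the inequality on a single generator $x^\loz=[t\overrightarrow{ab}]$ of $X^\loze$. The bound-vector decomposition $\overrightarrow{pm}=\overrightarrow{px_1}+\overrightarrow{x_1 m}$ together with (ii) applied at basepoint $x_1$ (so ``$x$'' in (ii) is $x_1$ and ``$y$'' is $x_2$) gives
\[ \langle\overrightarrow{pm},\overrightarrow{ab}\rangle = \langle\overrightarrow{px_1},\overrightarrow{ab}\rangle + \lambda\langle\overrightarrow{x_1 x_2},\overrightarrow{ab}\rangle = (1-\lambda)\langle\overrightarrow{px_1},\overrightarrow{ab}\rangle + \lambda\langle\overrightarrow{px_2},\overrightarrow{ab}\rangle, \]
which is the equality version of the $\mathcal{W}$-condition and certainly yields (iv) (hence, trivially, (iii)) for every $M\subseteq X\times X^\loze$ and every basepoint $p\in X$.

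The main obstacle is (iii)$\Rightarrow$(i), because it must extract an equality (flatness) from a one-sided inequality. The decisive observation is that $X^\loze$ is a linear space, so whenever $x^\loz\in\Ran(X\times X^\loze)=X^\loze$ lies in the range, so does $-x^\loz$; applying the $\mathcal{W}$-inequality to both $x^\loz$ and $-x^\loz$ forces the equality
\[ \langle x^\loz,\overrightarrow{qm}\rangle = (1-\lambda)\langle x^\loz,\overrightarrow{qx_1}\rangle + \lambda\langle x^\loz,\overrightarrow{qx_2}\rangle. \]
I would then specialise to $x^\loz=[\overrightarrow{zx_1}]$ for arbitrary $z\in X$ (the choice of second endpoint $x_1$ will kill a distance term on the right-hand side), expand via the defining formula for $\langle\cdot,\cdot\rangle$, observe that all $q$-dependent terms cancel, and use $d(x_1,m)=\lambda d(x_1,x_2)$. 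A short rearrangement then produces the flatness identity $d(z,m)^2=(1-\lambda)d(z,x_1)^2+\lambda d(z,x_2)^2-\lambda(1-\lambda)d(x_1,x_2)^2$, closing the cycle.
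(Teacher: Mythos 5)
Your proposal is correct, and although its computational core overlaps heavily with the paper's, the architecture is genuinely different in two respects. First, the paper does not prove (i) $\Leftrightarrow$ (ii) at all: it cites an external result (Theorem 3.2 of Movahedi--Behmardi--Soleimani-Damaneh), whereas you derive (i) $\Rightarrow$ (ii) and (i) $\Rightarrow$ (vi) directly by substituting the flatness equality into the quasilinearization formula, and you never need (ii) $\Rightarrow$ (i) because your loop closes at (i) via (iii) $\Rightarrow$ (i). This makes your argument self-contained, at the cost of two extra (routine) expansions. Second, for the decisive reverse implication the paper proves (iii) $\Rightarrow$ (ii): it applies the $\mathcal{W}$-inequality to $x^\loz=[\overrightarrow{ab}]$ and then interchanges $a$ and $b$ to reverse the inequality; you instead note that $\Ran(X\times X^\loze)=X^\loze$ is a linear space, apply the inequality to $\pm x^\loz$ to force equality, and then specialize $x^\loz=[\overrightarrow{zx_1}]$ so that the $q$-dependent terms cancel and $d(x_1,m)=\lambda d(x_1,x_2)$ converts the result into the CN-equality. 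These are the same underlying trick --- swapping $a$ and $b$ is exactly passing to $-[\overrightarrow{ab}]$ --- but your version lands on (i) without the detour through (ii) and the external citation. Your remaining steps ((ii) $\Rightarrow$ (iv) by verifying the \emph{equality} version on generators and extending by linearity, (vi) $\Rightarrow$ (v) trivially, (v) $\Rightarrow$ (vi) via $\varphi_{\overrightarrow{zp}}=-\varphi_{\overrightarrow{pz}}$) match the paper's in substance; the one point worth making explicit in a write-up is the one you already flagged, namely that in (ii) $\Rightarrow$ (iv) it is the equality, not the inequality, that survives linear combinations with negative coefficients.
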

\begin{proof}~\begin{description}
\item[(i) $\Leftrightarrow$ (ii):]  \cite[Theorem 3.2]{MovahediBehmardiSoleimani-Damaneh}.

\item[(ii)$\Rightarrow$  (iii):]
 Let $x,y \in X$, $\lambda \in [0,1]$ and $(x, x^\loz)\in X\times X^\loze$. Then $x^\loz =\sum^n_{i=1} \alpha_i[t_i\overrightarrow{a_ib_i}]\in X^\loze$,  and hence by using (ii) we get:
\begin{align*}
\big\langle x^\loz,\overrightarrow{p\big((1-\lambda)x\oplus \lambda y\big)}\big \rangle
&=\sum^n_{i=1} \alpha_i t_i \big \langle \overrightarrow{a_ib_i},\overrightarrow{px}+ \overrightarrow{x\big((1-\lambda)x\oplus \lambda y\big)}\big \rangle  \\
&=\sum^n_{i=1} \alpha_i t_i \big(\big \langle \overrightarrow{a_ib_i},\overrightarrow{px} \big \rangle +\big \langle \overrightarrow{a_ib_i},\overrightarrow{x\big((1-\lambda)x\oplus \lambda y\big)}\big \rangle \big)  \\
&=\sum^n_{i=1} \alpha_i t_i \big(\big \langle \overrightarrow{a_ib_i},\overrightarrow{px} \big \rangle +\lambda \big \langle \overrightarrow{a_ib_i},\overrightarrow{xy}\big \rangle \big)  \\
&=\sum^n_{i=1} \alpha_i t_i \big(\big \langle \overrightarrow{a_ib_i},\overrightarrow{px} \big \rangle +\lambda \big \langle \overrightarrow{a_ib_i},\overrightarrow{py}-\overrightarrow{px}\big \rangle \big)   \\
&=\sum^n_{i=1} \alpha_i t_i \big( (1-\lambda )\big\langle \overrightarrow{a_ib_i},\overrightarrow{px}\big  \rangle +\lambda\big \langle \overrightarrow{a_ib_i},\overrightarrow{py} \big\rangle \big)\\
&=(1-\lambda )\sum^n_{i=1} \alpha_i t_i\big \langle \overrightarrow{a_ib_i},\overrightarrow{px}\big \rangle +\lambda \sum^n_{i=1} \alpha_i t_i  \big  \langle \overrightarrow{a_ib_i},\overrightarrow{py} \big \rangle  \\
&=(1-\lambda) \Big \langle \sum^n_{i=1} \alpha_i[t_i\overrightarrow{a_ib_i}],\overrightarrow{px} \Big \rangle +\lambda \Big \langle \sum^n_{i=1} \alpha_i [t_i\overrightarrow{a_ib_i}],\overrightarrow{py}\Big \rangle \\
&=(1-\lambda) \langle x^\loz,\overrightarrow{px} \rangle +\lambda \langle x^\loz , \overrightarrow{py} \rangle.
\end{align*}

Therefore $ X \times X^\loze$ has $\mathcal{W}$-property.

\item[(iii) $\Leftrightarrow$  (iv):] Straightforward.

\item[(iv) $\Rightarrow$  (v):] Let $x,y \in X$ and $\lambda \in [0,1]$, then
\begin{align*}
\hspace{-1.5cm}(1-\lambda) \varphi_{\overrightarrow{pz}}(x)+\lambda \varphi_{\overrightarrow{pz}}(y)-\varphi_{\overrightarrow{pz}}((1-\lambda)x\oplus \lambda y)&= \lambda(\varphi_{\overrightarrow{pz}}(y)-\varphi_{\overrightarrow{pz}}(x))+\varphi_{\overrightarrow{pz}}(x)-\varphi_{\overrightarrow{pz}}((1-\lambda)x\oplus \lambda y)\\
&= \lambda \langle  \overrightarrow{pz},\overrightarrow{xy}\rangle +\langle \overrightarrow{pz},\overrightarrow{((1-\lambda)x\oplus \lambda y)x}\rangle\\
&= \lambda \langle  \overrightarrow{pz},\overrightarrow{py}-\overrightarrow{px}\rangle +\langle \overrightarrow{pz},\overrightarrow{px}-\overrightarrow{p((1-\lambda)x\oplus \lambda y)}\rangle\\
&=\lambda \langle  \overrightarrow{pz},\overrightarrow{py}\rangle +(1-\lambda)\langle  \overrightarrow{pz},\overrightarrow{px}\rangle-\langle  \overrightarrow{pz},\overrightarrow{p((1-\lambda)x\oplus \lambda y)}\rangle\\
&\geq 0.
\end{align*}
Therefore, $\varphi_{\overrightarrow{pz}}$ is convex.
\item[(v) $\Rightarrow$  (vi):] It is easy.

\item[(vi) $\Rightarrow$  (iii):]
Let $x,y, p \in X$, $\lambda \in [0,1]$ and  $x^\loz =\sum^n_{i=1} \alpha_i[t_i\overrightarrow{p_iz_i}]\in X^\loze$ be given. Then
\begin{align*}
\hspace{-2cm} \big\langle x^\loz,\overrightarrow{p\big((1-\lambda)x\oplus \lambda y\big)}\big \rangle
&=\big \langle\sum^n_{i=1} \alpha_ix^*_i,\overrightarrow{p\big((1-\lambda)x\oplus \lambda y\big)}\big \rangle  \\
%&=\sum^n_{i=1} \alpha_i t_i \big \langle \overrightarrow{p_iz_i},\overrightarrow{px}+ \overrightarrow{x\big((1-\lambda)x\oplus \lambda y\big)}\big \rangle  \\
&=\sum^n_{i=1} \alpha_i t_i \big \langle \overrightarrow{p_iz_i}, \overrightarrow{p\big((1-\lambda)x\oplus \lambda y\big)}\big \rangle  \\
%&=\sum^n_{i=1} \alpha_i t_i \Big(\big \langle \overrightarrow{p_iz_i},\overrightarrow{px}  \big \rangle+\big\langle \overrightarrow{p_iz_i}, \overrightarrow{x\big((1-\lambda)x\oplus \lambda y\big)}\big \rangle \Big)  \\
&=\sum^n_{i=1}\alpha_i t_i \big(\varphi_{\overrightarrow{p_iz_i}}((1-\lambda)x\oplus \lambda y)- \varphi_{\overrightarrow{p_iz_i}}(p)\big) \\
&= \sum^n_{i=1} \alpha_i t_i\big((1-\lambda)\varphi_{\overrightarrow{p_iz_i}}(x)+\lambda \varphi_{\overrightarrow{p_iz_i}}(y)-\varphi_{\overrightarrow{p_iz_i}}(p)\big)\\
&=\sum^n_{i=1} \alpha_i t_i\Big((1-\lambda)(\varphi_{\overrightarrow{p_iz_i}}(x)-\varphi_{\overrightarrow{p_iz_i}}(p))+\lambda (\varphi_{\overrightarrow{p_iz_i}}(y)-\varphi_{\overrightarrow{p_iz_i}}(p))\Big)\\
&=\sum^n_{i=1} \alpha_i t_i \big( (1-\lambda )\big\langle \overrightarrow{p_iz_i},\overrightarrow{px}\big  \rangle +\lambda\big \langle \overrightarrow{p_iz_i},\overrightarrow{py} \big\rangle \big)\\
&=(1-\lambda )\sum^n_{i=1} \alpha_i t_i\big \langle \overrightarrow{p_iz_i},\overrightarrow{px}\big \rangle +\lambda \sum^n_{i=1} \alpha_i t_i  \big  \langle \overrightarrow{p_iz_i},\overrightarrow{py} \big \rangle  \\
&=(1-\lambda) \Big \langle \sum^n_{i=1} \alpha_i[t_i\overrightarrow{p_iz_i}],\overrightarrow{px} \Big \rangle +\lambda \Big \langle \sum^n_{i=1} \alpha_i [t_i\overrightarrow{p_iz_i}],\overrightarrow{py}\Big \rangle \\
&=(1-\lambda) \langle x^\loz,\overrightarrow{px} \rangle +\lambda \langle x^\loz , \overrightarrow{py} \rangle;
\end{align*}
i.e.,   $X\times X^\loze$ has $\mathcal{W}$-property.
\item[(iii)$\Rightarrow$ (ii):] For $a, b, x, y \in X$ and $\lambda \in [0,1]$, we have:
\begin{align*}
\lambda \langle \overrightarrow{ab},\overrightarrow{xy}\rangle-\langle \overrightarrow{ab},\overrightarrow{x((1-\lambda)x\oplus \lambda y)}\rangle&=
 \lambda \big(\langle \overrightarrow{ab},\overrightarrow{py}-\overrightarrow{px} \rangle \big)-\langle \overrightarrow{ab},\overrightarrow{p((1-\lambda)x\oplus \lambda y)}-\overrightarrow{px}  \rangle \\
 &= \lambda \big(\langle \overrightarrow{ab},\overrightarrow{py}\rangle - \langle \overrightarrow{ab},\overrightarrow{px} \rangle \big)-\langle \overrightarrow{ab},\overrightarrow{p((1-\lambda)x\oplus \lambda y)}  \rangle+ \langle \overrightarrow{ab},\overrightarrow{px}\rangle\\
 &=  (1-\lambda) \langle \overrightarrow{ab},\overrightarrow{px}\rangle +\lambda \langle \overrightarrow{ab},\overrightarrow{py} \rangle-\langle \overrightarrow{ab},\overrightarrow{p((1-\lambda)x\oplus \lambda y)}  \rangle\\
& =(1-\lambda) \langle x^\loz,\overrightarrow{px} \rangle +\lambda \langle x^\loz , \overrightarrow{py} \rangle \rangle-\langle x^\loz,\overrightarrow{p((1-\lambda)x\oplus \lambda y)},
\end{align*}
where $x^\loz=[\overrightarrow{ab}] \in X^\loze$. Since $ X \times X^\loze$ has $\mathcal{W}$-property, one can deduce that:
\begin{align}\label{z3}
\lambda \langle \overrightarrow{ab},\overrightarrow{xy}\rangle\geq\langle \overrightarrow{ab},\overrightarrow{x((1-\lambda)x\oplus \lambda y)}\rangle.
\end{align}
Hence, by interchanging the role of $a$ and $b$ in \eqref{z3}, we obtain:
\begin{align}
\langle \overrightarrow{ab},\overrightarrow{x((1-\lambda)x\oplus \lambda y)}\rangle \geq \lambda \langle \overrightarrow{ab},\overrightarrow{xy}\rangle.  \label{z33}
\end{align}
Finally, \eqref{z3} and \eqref{z33} yield:
\[\langle \overrightarrow{ab},\overrightarrow{p((1-\lambda)x\oplus \lambda xy)}  \rangle = \lambda \big(\langle \overrightarrow{ab},\overrightarrow{xy}\rangle \big).\]

\end{description}
We are done.
\end{proof}
~\\
The next example shows that there exists a relation $M \subseteq X \times X^\loze$ in the non-flat Hadamard spaces which doesn't have the $\mathcal{W}$-property.
\begin{example}\label{ExMonS} Consider the following equivalence relation on $\mathbb{N} \times [0,1]$:
\[(n,t)\sim (m,s) \Leftrightarrow t=s=0 ~\text{or} ~(n,t)=(m,s).\]
Set $ X:=\frac{\mathbb{N} \times [0,1]}{\sim} $ and let $ d :X \times X \rightarrow \mathbb{R} $ be defined by
\begin{align*}
d([(n,t)],[(m,s)])=\begin{cases}
|t-s| &n=m,
\\
t+s  & n\neq m.
\end{cases}
\end{align*}
The geodesic joining $x=[(n,t)]$ to $y=[(m,s)] $ is defined as follows:
\[ (1-\lambda)x \oplus \lambda y:=\begin{cases}
[(n,(1-\lambda)t-\lambda s)] & 0 \leq \lambda \leq \frac{t}{t+s},
\\[1mm]
[(m,(\lambda-1)t+\lambda s)]   & \frac{t}{t+s} \leq  \lambda \leq 1,
\end{cases}
\]
whenever $x\neq y$ and vacuously  $(1-\lambda)x \oplus \lambda x:=x$.
It is known that (see \cite[Example 4.7]{Kakavandi}) $(X,d)$ is an $\mathbb{R}$-tree space. It follows from \cite[Example 1.2.10]{Bacak2014}, that any $\mathbb{R}$-tree space is an Hadamard space.
 Let $x=[(2,\frac{1}{2})]$, $ y=[(1,\frac{1}{2})] $, $ a=[(3,\frac{1}{3})] $, $b= [(2,\frac{1}{2})] $ and $\lambda = \frac{1}{5}$. Then $\frac{4}{5}x \oplus \frac{1}{5}y =[(2,\frac{3}{10})]$ and
\[\bigg\langle \overrightarrow{x(\frac{4}{5}x \oplus \frac{1}{5}y )},\overrightarrow{ab}\bigg \rangle=\frac{-1}{6} \neq\frac{-1}{10}= \frac{1}{5}\Big\langle\overrightarrow{xy}, \overrightarrow{ab}\Big\rangle.\]
Now, Proposition \ref{frtg}(ii) implies that  $(X,d)$ is not a flat Hadamard space.
For each $n\in \mathbb{N}$, set $x_n:=[(n,\frac{1}{2})]$ and  $y_n:=[(n,\frac{1}{n})]$. Now, we define  \[M:=\big\{(x_n,[\overrightarrow{y_{n+1}y_n}]):n\in \mathbb{N}\big\}\subseteq X\times X^\loze.\]
 Take $p=[(1,1)] \in X$, $[\overrightarrow{y_5y_4}] \in \Ran(M)$ and $\lambda=\frac{1}{3}$. Clearly, $\tilde{x}:=(1-\lambda)x_1 \oplus \lambda x_3=[(1,\frac{1}{6})]$ and
$\big \langle[\overrightarrow{y_5y_4}],\overrightarrow{p\tilde{x}} \big \rangle=\frac{1}{24},$
while,
\[\frac{2}{3}\langle [\overrightarrow{y_5y_4}],\overrightarrow{px_1}\rangle +\frac{1}{3} \langle [\overrightarrow{y_5y_4}],\overrightarrow{px_3} \rangle=\frac{1}{40}.\]
Therefore, $M $ doesn't have  the $\mathcal{W}$-property.
\end{example}
\section{Monotone Relations}

Ahmadi Kakavandi and  Amini \cite{KakavandiAmini} introduced the notion of monotone operators in Hadamard spaces. In \cite{KhatibzadehRanjbar2017},
Khatibzadeh and Ranjbar, investigated some properties of monotone operators and their resolvents and also proximal point algorithm in Hadamard spaces. Chaipunya and  Kumam \cite{ChaipunyaKumam} studied the general proximal point method for finding a zero point of a maximal monotone set-valued vector field defined on Hadamard spaces. They proved the relation between the maximality and Minty's surjectivity condition. Zamani Eskandani and Raeisi \cite{ZamaniRaeisi}, by using products of finitely many resolvents of monotone operators, proposed an iterative algorithm for finding a common zero of a finite family of monotone operators and a common fixed point of an infinitely countable family of non-expansive mappings in Hadamard spaces.
In this section, we will characterize the notation of monotone relations in Hadamard spaces based on characterization of monotone sets in Banach spaces \cite{CoodeySimons1996, Simons1998, Zalinescu}.

\begin{definition}{\rm Let $X$ be an Hadamard space with linear dual space $X^\loze$. The set $M \subseteq X \times X^\loze$ is called \textit{monotone} if  $\langle x^\loz-y^\loz,\overrightarrow{yx}\rangle\geq0$,~for all $(x,x^\loz)$,$(y,y^\loz)$ in $M$.
}\end{definition}
\begin{example} Let $x_n$, $y_n$ and  $M$ be  the same as in Example \ref{ExMonS}. Let $(u, u^\loz), (v, v^\loz)\in M$. There exists $m,n\in \mathbb{N}$ such that $u=x_n$, $u^\loz:=[\overrightarrow{y_{n+1}y_n}]$, $v=x_m$ and $v^\loz:=[\overrightarrow{y_{m+1}y_m}]$. Then
\begin{align*}
\langle u^\loz-v^\loz, \overrightarrow{vu} \rangle = \langle u^\loz, \overrightarrow{vu} \rangle -\langle v^\loz, \overrightarrow{vu} \rangle&= \Big\langle \big[\overrightarrow{\big[(n+1,\frac{1}{n+1})\big]\big[(n,\frac{1}{n})\big]}\big], \overrightarrow{\big[(m,\frac{1}{2})\big]\big[(n,\frac{1}{2})\big]} \Big\rangle\\
&\quad -\Big\langle \big[\overrightarrow{\big[(m+1,\frac{1}{m+1})\big]\big[(m,\frac{1}{m})\big]}\big], \overrightarrow{\big[(m,\frac{1}{2})\big]\big[(n,\frac{1}{2})\big]}\Big \rangle \\
&=\begin{cases}
0,  &~~~~n=m,\\[1mm]
\frac{1}{m+1}+\frac{1}{n}+\frac{1}{m}, &~~~~n=m+1, \\[1mm]
\frac{1}{n+1}+\frac{1}{n}+\frac{1}{m},  &~~~~n=m-1, \\[1mm]
\frac{1}{n}+\frac{1}{m},  &~~~~n\notin \{m-1,m,m+1\}.
\end{cases}
\end{align*}
Therefore, $\langle u^\loz-v^\loz, \overrightarrow{vu} \rangle \geq 0$ which shows that, $M$ is a monotone relation.
\end{example}

In the sequel, we need the following notations. Let $X$ be an Hadamard space and  $Y \subseteq X$. Put
\[\varsigma_Y:=\bigg\{\eta:Y\rightarrow[0,+\infty[ ~\big|~ \supp\,\eta \textit{~is finite} \text{~and~} \sum_{x\in Y}\eta (x)=1\bigg\}\] where $\supp\,{\eta}=\{y\in Y:\eta (y)\neq0\}$. Clearly, for each $\emptyset \neq A \subset Y$, $\varsigma_A=\{\eta \in\varsigma_Y:\supp\,\eta  \subseteq A\}$. It is obvious that $\varsigma_A$ is a convex subset of $\mathbb{R}^Y$. Moreover, if $\emptyset \neq A\subseteq B$, then $\varsigma_A \subseteq \varsigma_B$.   Suppose $u\in Y$ be fixed. Define  $\delta_u  \in \varsigma_Y$  by \begin{equation*}
\delta_u (x)= \left\{
\begin{array}{rl}
    1  &   x=u,\\
    0  &   x\neq u.
\end{array} \right.
\end{equation*}
Let $M \subseteq X \times X^\loze$ and $\eta \in \varsigma_A$. Then $\supp\eta=\{\lambda_1,\ldots,\lambda_n\}$ where $\lambda_i=\eta (x_i,x_i^\loz),~\text{for each}~1\leq i \leq n $. Let  $p\in X$ be fixed. Define $\alpha :\varsigma_{X \times X^{\loze}} \rightarrow X$ $($resp. $\beta :\varsigma_{X \times X^\loze} \rightarrow  X^\loze$ and $\theta_p :\varsigma_{X \times X^{\loze}} \rightarrow \mathbb{R})$ by
\begin{align*}
\alpha(\eta)=\bigoplus_{i=1}^n \lambda_ix_i ,~\Big(\text{resp.}~\beta(\eta)=\sum_{i=1}^n \lambda_ix_i^\loz ~\text{and} ~\theta_p(\eta)=\sum_{i=1}^n \lambda_i\langle x_i^\loz,\overrightarrow{px_i} \rangle\Big).
\end{align*}

 \begin{proposition}\label{tetapw} Let $X$ be an Hadamard space, $M \subseteq X \times X^\loze$ and $p\in X$. Set
 \begin{equation}
 \Theta_{p,M}:=\Big\{\eta \in\varsigma_M:\theta_p (\eta ) \geq \langle \beta(\eta ),\overrightarrow{p\alpha(\eta )}\rangle\Big\}. \label{tet}
 \end{equation}
Then $ \Theta_{p,M}= \Theta_{q,M}$ for any $q\in X$.
 \end{proposition}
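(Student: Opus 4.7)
My strategy is to show the stronger statement that the \emph{quantity} $\theta_{p}(\eta) - \langle \beta(\eta), \overrightarrow{p\alpha(\eta)}\rangle$ is itself independent of the choice of $p \in X$; the equality of the sets $\Theta_{p,M}$ and $\Theta_{q,M}$ then follows immediately because membership is governed by the sign of this quantity.

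Fix $\eta \in \varsigma_M$ with $\supp\eta = \{(x_1,x_1^\loze),\dots,(x_n,x_n^\loze)\}$ and $\lambda_i := \eta(x_i, x_i^\loze)$, and fix arbitrary $p,q \in X$. First, using admissibility of the bound vectors $\overrightarrow{pq}$ and $\overrightarrow{qx_i}$, decompose $\overrightarrow{px_i} = \overrightarrow{pq} + \overrightarrow{qx_i}$. The additivity of the quasilinearization in its second argument (which is a direct consequence of the defining formula \eqref{inb}, and which extends to the action of $X^\loze$ on $X^2$ by linearity) yields
\begin{equation*}
\theta_p(\eta) \;=\; \sum_{i=1}^n \lambda_i\langle x_i^\loze,\overrightarrow{pq}\rangle + \sum_{i=1}^n \lambda_i\langle x_i^\loze,\overrightarrow{qx_i}\rangle \;=\; \langle \beta(\eta),\overrightarrow{pq}\rangle + \theta_q(\eta),
\end{equation*}
where in the last step I pull the scalars $\lambda_i$ into the functional $\sum_i \lambda_i x_i^\loze = \beta(\eta)$.

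Next, apply the same additivity to the single bound vector $\overrightarrow{p\alpha(\eta)} = \overrightarrow{pq} + \overrightarrow{q\alpha(\eta)}$ to obtain
\begin{equation*}
\langle \beta(\eta),\overrightarrow{p\alpha(\eta)}\rangle \;=\; \langle \beta(\eta),\overrightarrow{pq}\rangle + \langle \beta(\eta),\overrightarrow{q\alpha(\eta)}\rangle.
\end{equation*}
Subtracting the two identities shows that the $\langle \beta(\eta),\overrightarrow{pq}\rangle$ terms cancel, yielding
\begin{equation*}
\theta_p(\eta) - \langle \beta(\eta),\overrightarrow{p\alpha(\eta)}\rangle \;=\; \theta_q(\eta) - \langle \beta(\eta),\overrightarrow{q\alpha(\eta)}\rangle.
\end{equation*}
Hence $\eta \in \Theta_{p,M}$ if and only if $\eta \in \Theta_{q,M}$, and since $p,q \in X$ were arbitrary the conclusion follows.

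The only nontrivial point is confirming the additivity identity $\langle x^\loze, \overrightarrow{ac}\rangle = \langle x^\loze, \overrightarrow{ab}\rangle + \langle x^\loze, \overrightarrow{bc}\rangle$ for bound vectors admissible at $b$; this is the main (and only) place where one uses structure beyond bookkeeping. It is a short computation from the definition \eqref{inb}, in which the terms $\pm d(\cdot,b)^2$ telescope, and it extends to all of $X^\loze = \spa X^*$ by $\mathbb{R}$-linearity. Once this is in hand, the rest of the argument is essentially a single cancellation.
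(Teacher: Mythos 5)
Your proof is correct and follows essentially the same route as the paper's: both rest on decomposing through the intermediate point ($\overrightarrow{px_i}=\overrightarrow{pq}+\overrightarrow{qx_i}$) and the additivity of the pairing, which give $\theta_p(\eta)=\langle\beta(\eta),\overrightarrow{pq}\rangle+\theta_q(\eta)$ together with the matching identity for the $\alpha$-term. The only cosmetic difference is that you cancel the two $\langle\beta(\eta),\overrightarrow{pq}\rangle$ terms to conclude that the defect $\theta_p(\eta)-\langle\beta(\eta),\overrightarrow{p\alpha(\eta)}\rangle$ is itself independent of $p$, whereas the paper chains the same identities into the one-sided inclusion $\Theta_{p,M}\subseteq\Theta_{q,M}$ and appeals to symmetry in $p$ and $q$.
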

 \begin{proof}
It is enough to show that $ \Theta_{p,M} \subseteq \Theta_{q,M}$.  Let $\eta \in \Theta_{p,M}$ be such that $\supp\eta=\{\lambda_1,\ldots,\lambda_n\}$ where $\lambda_i=\eta (x_i,x_i^\loz),~\text{for each}~1\leq i \leq n $. Then
\begin{align*}
\theta_q (\eta ) &=\sum_{i=1}^n \lambda_i\langle x_i^\loz,\overrightarrow{qx_i} \rangle=\sum_{i=1}^n \lambda_i\langle x_i^\loz,\overrightarrow{qp} \rangle+\sum_{i=1}^n \lambda_i\langle x_i^\loz,\overrightarrow{px_i} \rangle\\
&=\langle \sum_{i=1}^n \lambda_ix_i^\loz,\overrightarrow{qp} \rangle+\theta_p (\eta )=\langle \beta(\eta ),\overrightarrow{qp}\rangle+\theta_p (\eta )\\
&\geq\langle \beta(\eta ),\overrightarrow{qp}\rangle+\langle \beta(\eta ),\overrightarrow{p\alpha(\eta )}\rangle\\
&=\langle \beta(\eta ),\overrightarrow{q\alpha(\eta )}\rangle.
\end{align*}
Therefore, $ \eta \in \Theta_{q,M}$, i.e., $ \Theta_{p,M}\subseteq \Theta_{q,M}$.
 \end{proof}
According to Proposition \ref{tetapw}, for each  $M \subseteq X \times X^\loze$,  the set $\Theta_{p,M}$  is independent of the choice of the element $p \in X$ and hence we denote the set $\Theta_{p,M}$ by $\Theta_M$.
\begin{theorem}\label{taw} Let $X$ be an Hadamard space and $M \subseteq X \times X^\loze$ satisfies the
 $\mathcal{W}$-property. Then $M$ is a monotone set  if and only if   $\Theta_M=\varsigma_M$.
\end{theorem}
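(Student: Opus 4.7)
The plan is to reduce both implications to a single algebraic identity that links the $\Theta_M$-defining quantity to the symmetric monotonicity sum $\tfrac{1}{2}\sum_{i,j}\lambda_i\lambda_j\langle x_i^\loz - x_j^\loz, \overrightarrow{x_j x_i}\rangle$, together with an auxiliary residual term that the $\mathcal{W}$-property forces to be non-positive. First I would invoke Proposition \ref{tetapw} to rewrite $\theta_p(\eta) - \langle \beta(\eta), \overrightarrow{p\alpha(\eta)}\rangle$ in the $p$-independent form $\sum_i \lambda_i \langle x_i^\loz, \overrightarrow{\alpha(\eta) x_i}\rangle$. Then, by $\mathbb{R}$-linearity of $\langle\cdot,\cdot\rangle$ in its first slot and the $(i,j)\leftrightarrow(j,i)$ symmetrization of the resulting double sum, I would establish the identity
\[
\sum_i \lambda_i \langle x_i^\loz, \overrightarrow{\alpha(\eta) x_i}\rangle + \sum_j \lambda_j \langle \beta(\eta), \overrightarrow{x_j\alpha(\eta)}\rangle = \tfrac{1}{2}\sum_{i,j}\lambda_i\lambda_j \langle x_i^\loz - x_j^\loz, \overrightarrow{x_j x_i}\rangle.
\]
As a second ingredient, setting $q = \alpha(\eta)$ in Proposition \ref{inq2}(iii) and using that $\overrightarrow{\alpha(\eta)\alpha(\eta)}$ is a zero bound vector gives $\sum_i \lambda_i \langle x_j^\loz, \overrightarrow{\alpha(\eta) x_i}\rangle \geq 0$ for each $j\in\{1,\dots,n\}$; weighted-summing over $j$ yields $\sum_j \lambda_j \langle \beta(\eta), \overrightarrow{x_j\alpha(\eta)}\rangle \leq 0$.

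For the forward direction I would assume $M$ is monotone. The right-hand side of the identity is then non-negative, since pairing $(i,j)$ with $(j,i)$ collects each pair into $2\lambda_i\lambda_j \langle x_i^\loz - x_j^\loz, \overrightarrow{x_j x_i}\rangle \geq 0$. Subtracting the non-positive residual preserves the direction, so $\sum_i \lambda_i \langle x_i^\loz, \overrightarrow{\alpha(\eta) x_i}\rangle \geq \tfrac{1}{2}\sum_{i,j}\lambda_i\lambda_j\langle x_i^\loz - x_j^\loz, \overrightarrow{x_j x_i}\rangle \geq 0$, and hence $\eta \in \Theta_M$.

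For the reverse direction I would take $\eta = \tfrac12 \delta_{(x, x^\loz)} + \tfrac12 \delta_{(y, y^\loz)}$ for arbitrary $(x, x^\loz), (y, y^\loz) \in M$ and write $m = \tfrac12 x \oplus \tfrac12 y$. The identity specializes to $A + B = \tfrac14\langle x^\loz - y^\loz, \overrightarrow{yx}\rangle$, where $A = \tfrac12[\langle x^\loz, \overrightarrow{mx}\rangle + \langle y^\loz, \overrightarrow{my}\rangle]$ is $\geq 0$ by the assumption $\Theta_M = \varsigma_M$ and is bounded below by $\tfrac14\langle x^\loz - y^\loz, \overrightarrow{yx}\rangle$ after applying $\mathcal{W}$ with $q = x$ and $q = y$ separately, while $B$ is $\leq 0$ by the general residual bound above. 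The hard part will be in closing the implication: a priori both the $\Theta_M$-condition and the $\mathcal{W}$-inequalities give one-sided bounds on $A$ in the same direction, so extracting the sign of $\langle x^\loz - y^\loz, \overrightarrow{yx}\rangle$ requires unpacking $B = \tfrac14[\langle x^\loz, \overrightarrow{xm}\rangle + \langle x^\loz, \overrightarrow{ym}\rangle + \langle y^\loz, \overrightarrow{xm}\rangle + \langle y^\loz, \overrightarrow{ym}\rangle]$ and using $\mathcal{W}$ applied with base points $x$ and $y$ on each of the four terms so as to show that the combination $A + B$ collapses to $\tfrac{1}{4}\langle x^\loz - y^\loz, \overrightarrow{yx}\rangle$ with the right monotonicity sign, which is the main obstacle of the argument.
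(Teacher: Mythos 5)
Your forward direction is correct and is, after unwinding, the same argument as the paper's: taking the base point $p=\alpha(\eta)$ turns $\theta_p(\eta)-\langle\beta(\eta),\overrightarrow{p\alpha(\eta)}\rangle$ into $\sum_i\lambda_i\langle x_i^\loz,\overrightarrow{\alpha(\eta)x_i}\rangle$, your symmetrization identity is a valid quasilinearization computation, and your residual bound $\sum_j\lambda_j\langle\beta(\eta),\overrightarrow{x_j\alpha(\eta)}\rangle\le 0$ is exactly the paper's single application of $(\mathcal{W}_n(p))$ read at $p=\alpha(\eta)$. That half stands.

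The reverse direction, however, is not a proof: you have left open the only step that matters, and the obstacle you flag is genuine and is not removed by the plan you sketch. With $\eta=\tfrac12\delta_{(x,x^\loz)}+\tfrac12\delta_{(y,y^\loz)}$ and $m=\tfrac12 x\oplus\tfrac12 y$ you have $A+B=\tfrac14\langle x^\loz-y^\loz,\overrightarrow{yx}\rangle$ with $A\ge 0$ from $\Theta_M=\varsigma_M$ and $B\le 0$ from the $\mathcal{W}$-property, and these two one-sided bounds are compatible with either sign of $A+B$. Your proposed remedy --- applying $\mathcal{W}$ with base points $x$ and $y$ to the four terms of $B$ --- can only reproduce $B\le 0$: every use of the $\mathcal{W}$-inequality gives an \emph{upper} bound on $\langle z^\loz,\overrightarrow{xm}\rangle$ and $\langle z^\loz,\overrightarrow{ym}\rangle$ for $z^\loz\in\Ran(M)$, hence an upper bound on $B$, never the lower bound $B\ge 0$ that would be needed; and your observation that $\mathcal{W}$ at $q=x$ and $q=y$ yields $A\ge\tfrac14\langle x^\loz-y^\loz,\overrightarrow{yx}\rangle$ is equivalent to $B\le 0$, so it adds nothing. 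For comparison, the paper closes this direction via the chain $\tfrac14\langle x^\loz-y^\loz,\overrightarrow{yx}\rangle\ge\theta_p(\eta)-\langle\beta(\eta),\overrightarrow{p\alpha(\eta)}\rangle\ge 0$, whose first inequality amounts to $\big\langle\tfrac12x^\loz+\tfrac12y^\loz,\overrightarrow{pm}\big\rangle\ge\tfrac12\big\langle\tfrac12x^\loz+\tfrac12y^\loz,\overrightarrow{px}\big\rangle+\tfrac12\big\langle\tfrac12x^\loz+\tfrac12y^\loz,\overrightarrow{py}\big\rangle$, i.e. the $\mathcal{W}$-inequality used in the \emph{reverse} direction (equivalently $B\ge0$). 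That step is automatic when the $\mathcal{W}$-property holds with equality --- e.g. in flat spaces, by Proposition \ref{frtg}(vi), which is the setting of the ensuing corollary --- but in the general one-sided setting it is exactly the point you correctly identify as the difficulty and then do not resolve. As written, the reverse implication in your proposal is missing.
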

\begin{proof} Let $M$ be a monotone set. In view of (\ref{tet}), it is enough to show that $ \varsigma_M \subseteq  \Theta_M$. Let $\eta \in \varsigma_M$ be such that $\supp\eta=\{\lambda_1,\ldots,\lambda_n\}$ where $\lambda_i=\eta (x_i,x_i^\loz),~\text{for each}~1\leq i \leq n $. By using Proposition \ref{inq2}, we obtain:
 \begin{align*}
\theta_p (\eta ) - \langle \beta(\eta ),\overrightarrow{p\alpha(\eta )}\rangle &=\sum_{i=1}^n \lambda_i\langle x_i^\loz,\overrightarrow{px_i} \rangle-\Big\langle \sum_{j=1}^n \lambda_jx_j^\loz ,\overrightarrow{p\big(\scalebox{1.4}{$\oplus$}_{i=1}^n  \lambda_ix_i\big)}  \Big\rangle\\
&=\sum_{i=1}^n \lambda_i\langle x_i^\loz,\overrightarrow{px_i} \rangle-\sum_{j=1}^n \lambda_j\Big \langle  x_j^\loz ,\overrightarrow{p\big(\scalebox{1.4}{$\oplus$}_{i=1}^n  \lambda_ix_i\big)}  \Big\rangle \\
& \geq \sum_{i=1}^n \lambda_i\langle x_i^\loz,\overrightarrow{px_i} \rangle-\sum_{j=1}^n\sum_{i=1}^n \lambda_i \lambda_j\langle x_j^\loz,\overrightarrow{px_i} \rangle \\
& = \sum_{j=1}^n\sum_{i=1}^n \lambda_i \lambda_j \langle x_i^\loz,\overrightarrow{px_i} \rangle-\sum_{j=1}^n\sum_{i=1}^n \lambda_i \lambda_j\langle x_j^\loz,\overrightarrow{px_i} \rangle\\
& = \sum_{j=1}^n\sum_{i=1}^n \lambda_i \lambda_j \langle x_i^\loz-x_j^\loz,\overrightarrow{px_i} \rangle\\
& = \sum_{j=1}^n\sum_{i=1}^n \lambda_i \lambda_j \langle x_j^\loz-x_i^\loz,\overrightarrow{px_j} \rangle\\
& =\frac{1}{2} \sum_{i=1}^n\sum_{j=1}^n \lambda_i \lambda_j \langle x_i^\loz-x_j^\loz,\overrightarrow{px_i}-\overrightarrow{px_j} \rangle\\
& =\frac{1}{2} \sum_{i=1}^n\sum_{j=1}^n \lambda_i \lambda_j \langle x_i^\loz-x_j^\loz,\overrightarrow{x_jx_i} \rangle\geq0.
\end{align*}
Then $\varsigma_M \subseteq  \Theta_M$ and hence  $ \varsigma_M =\Theta_M$. For the converse, let $(x,x^{\loz}),(y,y^{\loz}) \in M $ and set $\eta:=\frac{1}{2}\delta_{(x,x^{\loz})}+\frac{1}{2}\delta_{(y,y^{\loz})}\in \varsigma_M$. By using $\mathcal{W}$-property, we get:
\begin{align*}
\frac{1}{4}\langle x^\loz-y^\loz,\overrightarrow{yx} \rangle &=\frac{1}{4}\langle x^\loz-y^\loz,\overrightarrow{px}-\overrightarrow{py} \rangle\\
&=\frac{1}{4}(\langle x^\loz-y^\loz,\overrightarrow{px} \rangle-\langle x^\loz-y^\loz,\overrightarrow{py} \rangle)\\
&=\frac{1}{4}\langle x^\loz,\overrightarrow{px} \rangle+\frac{1}{4}\langle y^\loz,\overrightarrow{py} \rangle-\frac{1}{4}\langle y^\loz,\overrightarrow{px} \rangle-\frac{1}{4}\langle x^\loz,\overrightarrow{py} \rangle\\
&=\frac{1}{2}\langle x^\loz,\overrightarrow{px} \rangle+\frac{1}{2}\langle y^\loz,\overrightarrow{py} \rangle-\frac{1}{4}\langle x^\loz,\overrightarrow{px} \rangle-\frac{1}{4}\langle x^\loz,\overrightarrow{py} \rangle-\frac{1}{4}\langle y^\loz,\overrightarrow{px} \rangle-\frac{1}{4}\langle y^\loz,\overrightarrow{py} \rangle\\
&\geq \frac{1}{2}\langle x^\loz,\overrightarrow{px} \rangle+\frac{1}{2}\big\langle y^\loz,\overrightarrow{py} \big\rangle-\big\langle\frac{1}{2}x^\loz+ \frac{1}{2}y^\loz,\overrightarrow{p(\frac{1}{2}x\oplus\frac{1}{2}y)} \big\rangle \\
&=\frac{1}{2}\langle x^\loz,\overrightarrow{px} \rangle+\frac{1}{2}\langle y^\loz,\overrightarrow{py} \rangle-\frac{1}{2}\big\langle x^\loz,\overrightarrow{p(\frac{1}{2}x\oplus\frac{1}{2}y)} \big\rangle-\frac{1}{2}\big\langle y^\loz,\overrightarrow{p(\frac{1}{2}x\oplus\frac{1}{2}y)} \big\rangle\\
&=\theta_p (\eta ) - \langle \beta(\eta ),\overrightarrow{p\alpha(\eta )}\rangle \geq0.
\end{align*}
Therefore, $M $ is monotone.
\end{proof}
\begin{corollary}Let $X$ be a flat Hadamard space and $M \subseteq X \times X^\loze$. Then $M$ is a monotone set  if and only if $\Theta_M=\varsigma_M$.
\end{corollary}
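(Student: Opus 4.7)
The plan is to derive the corollary as an immediate specialization of Theorem \ref{taw}, with the $\mathcal{W}$-property hypothesis verified for free via Proposition \ref{frtg}. The only substantive observation is that flatness is strong enough to guarantee the $\mathcal{W}$-property for every subset of $X\times X^\loze$, so no side calculation is needed.

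Concretely, I would first invoke Proposition \ref{frtg}, whose equivalence (i)$\Leftrightarrow$(iv) asserts that $X$ is a flat Hadamard space if and only if every subset of $X\times X^\loze$ has the $\mathcal{W}$-property. Since $X$ is assumed flat, this in particular applies to the given $M\subseteq X\times X^\loze$, so $M$ satisfies the $\mathcal{W}$-property. I would then apply Theorem \ref{taw} to $M$: because $M$ has the $\mathcal{W}$-property, that theorem asserts exactly the equivalence $M \text{ monotone} \iff \Theta_M = \varsigma_M$, which is the content of the corollary.

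There is no genuine obstacle here; the structural work has been absorbed into Proposition \ref{frtg} (characterizing flatness in terms of the $\mathcal{W}$-property) and Theorem \ref{taw} (characterizing monotonicity via $\Theta_M$ under the $\mathcal{W}$-property). The corollary is simply the statement that in the flat setting the $\mathcal{W}$-property hypothesis of Theorem \ref{taw} is automatic, so the characterization becomes unconditional on $M$.
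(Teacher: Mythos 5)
Your proposal is correct and follows exactly the paper's own argument: Proposition \ref{frtg} gives the $\mathcal{W}$-property for every subset of $X\times X^\loze$ when $X$ is flat, and Theorem \ref{taw} then yields the equivalence. Nothing is missing.
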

\begin{proof} Since $X$ is flat, Proposition \ref{frtg} implies that $M \subseteq X \times X^\loze$ satisfies the $\mathcal{W}$-property.  Then the conclusion follows immediately from Theorem \ref{taw}.
\end{proof}

A fundamental result concerning monotone operators is the extension
theorem of Debrunner-Flor (for a proof see \cite[Theorem 4.3.1]{BurachikIusem2008} or 
\cite[Proposition 2.17]{Zeidler1986}). In the sequel, we prove a type of this result for monotone relations from an Hadamard space to its linear dual space. First, we recall some notions and results.

\begin{definition} {\rm{\rm\cite[Definition 2.4]{KakavandiAmini}}}{\rm\,Let $\{x_n\}$ be a sequence in an Hadamard space $X$. The sequence $\{x_n\}$ is said to be \textit{weakly convergent} to $x\in X$, denoted by $x_n\overset{w}\longrightarrow x$, if $\lim_{n \rightarrow \infty} \langle \overrightarrow{xx_n},\overrightarrow{xy} \rangle =0$, for all $y \in X$.
}\end{definition}
One can easily see that convergence in the metric implies weak convergence.
\begin{lemma} {\rm{\rm\cite[Proposition 3.6]{ZamaniRaeisi}}} \label{loz1}Let $\{x_n\} $ be a bounded sequence in  an Hadamard space $(X,d)$ with linear dual space $X^\loze$ and let $\{x^\loz_n\}$ be a sequence in $X^{\loze}$. If  $\{x_n\}$ is  weakly convergent to $x$ and $x^{\loz}_n\xrightarrow{\|\cdot\|_\loz}x^\loz$, then $\langle x^\loz_n,\overrightarrow{x_nz}\rangle \rightarrow \langle x^\loz,\overrightarrow{xz}\rangle $,  for all $z\in X$.
\end{lemma}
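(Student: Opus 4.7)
The plan is to decompose the target difference into two pieces, one controlled by the norm convergence $x^\loz_n \to x^\loz$ in $X^\loze$ and one by the weak convergence $x_n \xrightarrow{w} x$ in $X$. Specifically, I would write
\[
\langle x^\loz_n,\overrightarrow{x_nz}\rangle - \langle x^\loz,\overrightarrow{xz}\rangle
= \langle x^\loz_n - x^\loz,\overrightarrow{x_nz}\rangle + \bigl(\langle x^\loz,\overrightarrow{x_nz}\rangle - \langle x^\loz,\overrightarrow{xz}\rangle\bigr),
\]
and argue that each summand tends to $0$.

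For the first summand, I would first record the basic estimate $|\langle y^\loz,\overrightarrow{ab}\rangle| \leq \|y^\loz\|_\loz\, d(a,b)$ for all $y^\loz\in X^\loze$. This follows from Lemma \ref{dg} by choosing $c=d$ in the supremum, since the zero bound vector $\overrightarrow{cc}$ satisfies $\langle y^\loz,\overrightarrow{cc}\rangle = 0$. Applying the estimate with $y^\loz = x^\loz_n - x^\loz$ gives
\[
|\langle x^\loz_n - x^\loz,\overrightarrow{x_nz}\rangle| \leq \|x^\loz_n - x^\loz\|_\loz\, d(x_n,z).
\]
Boundedness of $\{x_n\}$ makes $\{d(x_n,z)\}$ bounded, while $\|x^\loz_n-x^\loz\|_\loz \to 0$, so this term vanishes in the limit.

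For the second summand, the admissible sum $\overrightarrow{x_nz} = \overrightarrow{x_nx} + \overrightarrow{xz}$ together with the additivity of the quasilinearization along admissible chains (a one-line verification from the polarization formula, extended linearly to $X^\loze$) yields
\[
\langle x^\loz,\overrightarrow{x_nz}\rangle - \langle x^\loz,\overrightarrow{xz}\rangle = \langle x^\loz,\overrightarrow{x_nx}\rangle.
\]
Writing $x^\loz = \sum_{j=1}^{m}\alpha_j[t_j\overrightarrow{a_jb_j}]$, it suffices to show $\langle \overrightarrow{a_jb_j},\overrightarrow{xx_n}\rangle \to 0$ for each $j$. Decomposing through $x$ via the identity $\langle \overrightarrow{a_jb_j},\overrightarrow{xx_n}\rangle = \langle \overrightarrow{xb_j},\overrightarrow{xx_n}\rangle - \langle \overrightarrow{xa_j},\overrightarrow{xx_n}\rangle$ (another direct polarization computation, using the symmetry of $\langle\cdot,\cdot\rangle$) reduces matters exactly to the definition of $x_n \xrightarrow{w} x$ applied with $y = a_j$ and $y = b_j$.

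The main obstacle is the mismatch between the definition of weak convergence --- which only speaks of brackets $\langle \overrightarrow{xx_n},\overrightarrow{xy}\rangle$ based at the weak limit $x$ --- and a generic element of $X^\loze$ built from bound vectors $\overrightarrow{a_jb_j}$ based at arbitrary points. The additivity identity above is exactly what bridges this gap, converting the unrooted bracket into a finite combination of brackets based at $x$ so that the weak-convergence hypothesis becomes directly applicable. Once these manipulations are in place, boundedness of $\{x_n\}$ and norm convergence of $\{x^\loz_n\}$ combine with the weak limit to yield the claimed convergence.
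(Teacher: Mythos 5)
The paper gives no proof of this lemma---it is imported verbatim from \cite[Proposition 3.6]{ZamaniRaeisi}---so there is nothing internal to compare your argument against. Your proof is correct and self-contained: the estimate $|\langle y^\loz,\overrightarrow{ab}\rangle|\leq \|y^\loz\|_{\loz}\,d(a,b)$ does follow from Lemma \ref{dg} by taking $c=d$ there, the additivity of the pairing along admissible chains and the symmetry of the quasilinearization are both immediate from \eqref{inb}, and these are exactly what is needed to rewrite $\langle x^\loz,\overrightarrow{x_nx}\rangle$ as a finite combination of brackets $\langle\overrightarrow{xx_n},\overrightarrow{xy}\rangle$ to which the definition of weak convergence applies. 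The boundedness hypothesis is used only to control $d(x_n,z)$ in the first summand, which is consistent with its role in the statement.
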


\begin{theorem} { \label{dflr}
Let $X$ be an Hadamard space and  $M \subseteq X \times X^\loze$ be a monotone relation satisfies the $\mathcal{W}$-property. Let $C \subseteq X^\loze$ be a compact and  convex set, and $\varphi: C \rightarrow X$ be a continuous function. Then there exists $z^\loz \in C$ such that $\{(\varphi(z^\loz ),z^\loz )\} \cup M$ is monotone.}
\end{theorem}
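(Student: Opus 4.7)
The plan is to adapt the classical KKM / Debrunner--Flor argument to the Hadamard setting, using the quasilinearization pairing $\langle\cdot,\cdot\rangle$ in place of the Banach-space duality bracket. Since $M$ is already monotone, the enlarged relation $\{(\varphi(z^\loz),z^\loz)\}\cup M$ is monotone if and only if $\langle z^\loz-x^\loz,\overrightarrow{x\,\varphi(z^\loz)}\rangle\ge 0$ for every $(x,x^\loz)\in M$. Setting $H(x,x^\loz):=\{z^\loz\in C:\langle z^\loz-x^\loz,\overrightarrow{x\,\varphi(z^\loz)}\rangle\ge 0\}$, the conclusion reduces to $\bigcap_{(x,x^\loz)\in M}H(x,x^\loz)\ne\emptyset$.

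The first step is to verify that each $H(x,x^\loz)$ is closed in $(C,\|\cdot\|_\loz)$: if $z_n^\loz\to z^\loz$ in norm, continuity of $\varphi$ gives $\varphi(z_n^\loz)\to\varphi(z^\loz)$ in $(X,d)$, and a telescoping of $\langle z_n^\loz-x^\loz,\overrightarrow{x\,\varphi(z_n^\loz)}\rangle-\langle z^\loz-x^\loz,\overrightarrow{x\,\varphi(z^\loz)}\rangle$ together with the bound $|\langle u^\loz,\overrightarrow{ab}\rangle|\le\|u^\loz\|_\loz\,d(a,b)$ (obtained from Lemma \ref{dg} by zeroing one bound vector in the denominator) yields continuity of the defining function. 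Compactness of $C$ then reduces the goal, via the finite intersection property, to showing $\bigcap_{i=1}^n H(x_i,x_i^\loz)\ne\emptyset$ for any finite subfamily $(x_1,x_1^\loz),\ldots,(x_n,x_n^\loz)\in M$.

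For the finite case I would argue by contradiction. If $\bigcap_i H(x_i,x_i^\loz)=\emptyset$, then $\{V_i:=C\setminus H(x_i,x_i^\loz)\}_{i=1}^n$ is an open cover of $C$ and admits a continuous partition of unity $\{\psi_i\}_{i=1}^n$, giving
\begin{equation*}
\Sigma(z^\loz):=\sum_{i=1}^n\psi_i(z^\loz)\,\langle z^\loz-x_i^\loz,\overrightarrow{x_i\,\varphi(z^\loz)}\rangle<0\qquad\forall\,z^\loz\in C.
\end{equation*}
A Brouwer fixed-point step applied (on the simplex of barycentric coordinates) to the continuous map $z^\loz\mapsto \widehat{x^\loz}(z^\loz):=\sum_i\psi_i(z^\loz)\,x_i^\loz$ produces a $z^\loz$ with $\widehat{x^\loz}(z^\loz)=z^\loz$. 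At this fixed point, substituting $z^\loz=\sum_j\psi_j(z^\loz)\,x_j^\loz$ inside $\Sigma(z^\loz)$ and using the quasilinearization identity $\overrightarrow{x_i a}-\overrightarrow{x_j a}=\overrightarrow{x_ix_j}$ (valid as functionals on $X^\loze$) to symmetrize gives
\begin{equation*}
\Sigma(z^\loz)=\tfrac12\sum_{i,j}\psi_i\psi_j\,\langle x_j^\loz-x_i^\loz,\overrightarrow{x_ix_j}\rangle=\tfrac12\sum_{i,j}\psi_i\psi_j\,\langle x_i^\loz-x_j^\loz,\overrightarrow{x_jx_i}\rangle\ge 0
\end{equation*}
by the monotonicity of $M$, contradicting the pointwise strict negativity of $\Sigma$. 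Alternatively, Theorem \ref{taw} applied to the measure $\eta_{z^\loz}:=\sum_i\psi_i(z^\loz)\,\delta_{(x_i,x_i^\loz)}\in\varsigma_M=\Theta_M$ (with base point $p=\varphi(z^\loz)$) gives the same inequality directly, bypassing the explicit symmetrization.

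The main obstacle is the fixed-point step. Since $\widehat{x^\loz}$ sends $C$ into $\mathrm{conv}\{x_1^\loz,\ldots,x_n^\loz\}\subseteq X^\loze$, a finite-dimensional simplex that in general is not contained in $C$, Brouwer's theorem cannot be applied on $C$ directly. One must either enlarge the domain to a compact convex set containing both $C$ and this simplex, continuously extending $\varphi$ on it (e.g.\ via a Dugundji-type extension on the metrizable $X^\loze$), or substitute the Brouwer step with a Ky Fan--type selection theorem. The $\mathcal{W}$-property of $M$ is essential in precisely this step: it is the hypothesis underlying Theorem \ref{taw} that furnishes the key inequality above, and it is also what guarantees the compatibility between geodesic convex combinations $\widehat x(z^\loz):=\bigoplus_i\psi_i(z^\loz)\,x_i$ in $X$ and linear combinations $\widehat{x^\loz}(z^\loz)$ in $X^\loze$ that makes the monotonicity symmetrization consistent with the nonlinear CAT(0) geometry.
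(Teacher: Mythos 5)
Your overall architecture is the paper's: assume failure, cover $C$ by the open ``bad'' sets, take a subordinate partition of unity $\{\psi_i\}$, find a fixed point of $u^\loz\mapsto\sum_i\psi_i(u^\loz)x_i^\loz$, and contradict monotonicity. Two points of comparison. First, your symmetrization at the fixed point is actually cleaner than the paper's: writing $z^\loz=\sum_j\psi_j(z^\loz)x_j^\loz$ and using only linearity of the pairing in the dual slot together with additivity in the bound-vector slot ($\overrightarrow{x_ia}-\overrightarrow{x_ja}=\overrightarrow{x_ix_j}$ as functionals), you reach $\Sigma(z^\loz)=\tfrac12\sum_{i,j}\psi_i\psi_j\langle x_j^\loz-x_i^\loz,\overrightarrow{x_ix_j}\rangle\ge 0$ with no appeal to the $\mathcal{W}$-property at all. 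The paper instead starts from $0=\langle\iota(w^\loz)-w^\loz,\overrightarrow{\varphi(w^\loz)(\oplus_j\psi_j(w^\loz)y_j)}\rangle$ and must invoke $(\mathcal{W}_n(p))$ to distribute the pairing over the geodesic combination $\oplus_j\psi_j(w^\loz)y_j$ before monotonicity can be applied; your route makes that detour unnecessary. Consequently your closing claim that the $\mathcal{W}$-property is ``essential'' in the fixed-point step is wrong on your own account: the geodesic combination $\oplus_i\psi_i(z^\loz)x_i$ never enters your argument, and the hypothesis is used nowhere in it. Your preliminary reduction (closedness of the sets $H(x,x^\loz)$ plus the finite intersection property) is correct and equivalent to the paper's direct open-cover formulation.

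Second, the obstacle you flag at the Brouwer step is genuine and you have not resolved it. The paper's device is to apply Brouwer not on $C$ but on $K:=\co\{y_1^\loz,\ldots,y_n^\loz\}$, a finite-dimensional compact convex set that $\iota$ maps into itself; this disposes of the objection that $\iota(C)\not\subseteq C$. But the contradiction at the fixed point $w^\loz\in K$ uses $\sum_i\psi_i(w^\loz)=1$ and ``$\psi_i(w^\loz)>0\Rightarrow w^\loz\in U_i$'', and properties (i)--(iii) of the partition of unity are guaranteed only at points of $C$. One therefore still needs $K\subseteq C$, which the paper asserts without justification and which does not follow from the stated hypotheses; it is exactly the classical Debrunner--Flor assumption $\Ran(M)\subseteq C$, absent from the theorem. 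Without it the statement is false: in the flat space $X=\mathbb{R}$ (where $X^\loze\cong\mathbb{R}$ and $\langle s,\overrightarrow{yx}\rangle=s(x-y)$), take $M=\{(0,1)\}$, $C=\{-1\}$ and $\varphi(-1)=1$; then $M$ is monotone with the $\mathcal{W}$-property, $C$ is compact convex, $\varphi$ is continuous, yet the only candidate gives $\langle -1-1,\overrightarrow{0\,1}\rangle=-2<0$. So neither a Dugundji-type extension of $\varphi$ nor a Ky Fan substitute can close your gap as the theorem stands; the correct repair is to add the hypothesis $\Ran(M)\subseteq C$, after which $K\subseteq C$ and the paper's restriction of $\iota$ to $K$ completes precisely the step you left open.
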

\begin{proof}Let $x\in X$, $u^\loz, v^\loz\in X^\loze$ be  arbitrary and fixed element. Consider the function $\tau:C\rightarrow \mathbb{R}$ defined by
\begin{align*}
   \tau(x^\loz)= \langle x^\loz-v^\loz,\overrightarrow{x\varphi(u^\loz)} \rangle,~x^\loz\in C.
\end{align*}
Let $\{x_n^\loz\} \subseteq C$ be such that  $x^\loz_n\xrightarrow{\|\cdot\|_\loz} x^\loz $, for some $x^\loz \in C$. By  Lemma \ref{loz1}, \[\langle x_n^\loz-v^\loz,\overrightarrow{x\varphi(u^\loz)}\rangle\rightarrow\langle x^\loz-v^\loz,\overrightarrow{x\varphi(u^\loz)}\rangle.\]
Thus $\tau(x_n^\loz)\rightarrow \tau(x^\loz)$. Hence $\tau$ is continuous.
For every $(y,y^\loz) \in M$, set \[U(y,y^\loz):=\{u^\loz \in C: \langle u^\loz-y^\loz,\overrightarrow{y\varphi(u^\loz)}\rangle <0\}.\]
Continuity of $\tau$ implies that $ U(y,y^\loz) $ is an open subset of $C$. Suppose that the conclusion fails. Then for each $u^\loz \in C$ there exists $(y,y^\loz) \in M$ such that $u^\loz \in U(y,y^\loz) $. This means that the family of open sets $\{U(y,y^\loz)\}_{(y,y^\loz) \in M}$ is an open cover of $C$. Using the compactness of $C$, we obtain that $C = \bigcup_{i=1}^n U(y_i,y_i^\loz)$. In addition, \cite[Page 756]{Zeidler1986} implies that there exists a partition of unity associated with this finite subcover. Hence, there are continuous functions  $\psi_i:X^\loze\rightarrow \mathbb{R} ~(1\leq i\leq n) $ satisfying
\begin{enumerate}
\item[(i)] $\sum _{i=1}^n \psi_i(x^\loz)=1$, for all $x^\loz\in C$.
\item[(ii)] $\psi_i(x^\loz)\geq 0$, for all  $x^\loz\in C$ and all  $i\in \{1,\ldots,n\}$.
\item[(iii)] $\{x^\loz \in C: \psi_i(x^\loz)> 0\} \subseteq U_i:=U(y_i,y_i^\loz) $ for all $i\in \{1,\ldots,n\}$.
\end{enumerate}
Set $K:=\co(\{y_1^\loz,\ldots,y_n^\loz\}) \subseteq C$ and define \begin{align*}
\iota :&K \rightarrow K\\
& u^\loz \mapsto \sum_{i=1}^n \psi_i(u^\loz)y_i^\loz.
\end{align*}
Let $\{u_m^\loz\} \subseteq K$ be such that $u_m^\loz \rightarrow u^\loz$, 
\begin{align*}
\Big\|\iota(u^\loz_m)-\iota(u^\loz) \Big\|_{\loz}&=\Big\|\sum_{i=1}^n \psi_i(u_m^\loz)y_i^\loz-\sum_{i=1}^n \psi_i(u^\loz)y_i^\loz \Big\|_{\loz}\\&=\Big \|\sum_{i=1}^n (\psi_i(u_m^\loz)- \psi_i(u^\loz))y_i^\loz \Big \|_{\loz}\\
 & \leq \sum_{i=1}^n \big \|(\psi_i(u_m^\loz)- \psi_i(u^\loz))y_i^\loz \big \|_{\loz} \\
 & \leq \sum_{i=1}^n \big|\psi_i(u_m^\loz)- \psi_i(u^\loz)\big| \| y_i^\loz \|_{\loz}.
\end{align*}
By continuity of $\psi_i~(1 \leq i \leq n)$, letting  $ m\rightarrow +\infty$, then $ \psi_i(u_m^\loz)\rightarrow \psi_i(u^\loz) $ and this implies that $ \iota (u_m^\loz)\rightarrow\iota (u^\loz) $ and so $ \iota $ is continuous.
One can identify $K$  with a finite-dimensional convex and compact set. By using Brouwer fixed point theorem \cite[Proposition 2.6]{Zeidler1986}, there exists $w^\loz \in K$ such that $\iota(w^\loz)=w^\loz$. Moreover, by using Proposition  \ref{inq2} we get:
\begin{align} \nonumber
0&=\Big  \langle \iota(w^\loz)-w^\loz,\overrightarrow{\varphi(w^\loz)(\oplus_{j} \psi_j(w^\loz)y_j)} \Big  \rangle\\ \nonumber
&=\Big  \langle \sum_i \psi_i(w^\loz)(y_i^\loz-w^\loz),\overrightarrow{\varphi(w^\loz)(\oplus_j \psi_j(w^\loz)y_j)} \Big  \rangle\\ \nonumber
&=\Big  \langle \sum_{i} \psi_i(w^\loz)(y_i^\loz-w^\loz),\overrightarrow{p(\oplus_{j} \psi_j(w^\loz)y_j}) \Big  \rangle-\Big  \langle \sum_{i}\psi_i(w^\loz)(y_i^\loz-w^\loz),\overrightarrow{p\varphi(w^\loz)} \Big  \rangle&(p\in X)\\ \nonumber
&\leq \sum_{j}\psi_j(w^\loz) \Big  \langle \sum_{i} \psi_i(w^\loz)(y_i^\loz-w^\loz),\overrightarrow{p y_j} \Big  \rangle-\Big  \langle \sum_{i}\psi_i(w^\loz)(y_i^\loz-w^\loz),\overrightarrow{p\varphi(w^\loz)} \Big  \rangle \\ \nonumber
&= \sum_{j}\psi_j(w^\loz) \Big  \langle \sum_{i} \psi_i(w^\loz)(y_i^\loz-w^\loz),\overrightarrow{p y_j} \Big  \rangle-\sum_{j}\psi_j(w^\loz) \Big  \langle \sum_{i}\psi_i(w^\loz)(y_i^\loz-w^\loz),\overrightarrow{p\varphi(w^\loz)} \Big  \rangle\\  \nonumber
&= \sum_{j}\psi_j(w^\loz) \Big  \langle \sum_{i} \psi_i(w^\loz)(y_i^\loz-w^\loz),\overrightarrow{p y_j}-\overrightarrow{p\varphi(w^\loz)} \Big  \rangle\\  \nonumber
&= \sum_{j}\psi_j(w^\loz) \Big  \langle \sum_{i} \psi_i(w^\loz)(y_i^\loz-w^\loz),\overrightarrow{\varphi(w^\loz)y_j} \Big  \rangle\\ \nonumber
&= \sum_{j}\psi_j(w^\loz)\sum_{i}\psi_i(w^\loz) \Big  \langle y_i^\loz-w^\loz,\overrightarrow{\varphi(w^\loz)y_j} \Big  \rangle\\ \nonumber
&= \sum_{j}\sum_{i}\psi_j(w^\loz)\psi_i(w^\loz) \Big  \langle y_i^\loz-w^\loz,\overrightarrow{\varphi(w^\loz)y_j} \Big  \rangle\\
&= \sum_{i,j}\psi_i(w^\loz)\psi_j(w^\loz) \Big  \langle y_i^\loz-w^\loz,\overrightarrow{\varphi(w^\loz)y_j} \Big  \rangle. \label{ineqm}
\end{align}
Set $a_{ij}=\langle y_i^\loz-w^\loz,\overrightarrow{\varphi(w^\loz)y_j} \rangle$. It follows from monotonicity of $M$ that
\begin{align*}
 a_{ii}+a_{jj}-a_{ij}-a_{ji}&=\langle y_i^\loz-w^\loz,\overrightarrow{\varphi(w^\loz)y_i} \rangle+\langle y_j^\loz-w^\loz,\overrightarrow{\varphi(w^\loz)y_j} \rangle\\ \nonumber
&\quad-\langle y_i^\loz-w^\loz,\overrightarrow{\varphi(w^\loz)y_j} \rangle-\langle y_j^\loz-w^\loz,\overrightarrow{\varphi(w^\loz)y_i} \rangle\\
&=\langle y_i^\loz-y_j^\loz\,,\overrightarrow{\varphi(w^\loz)y_i}-\overrightarrow{\varphi(w^\loz)y_j} \rangle\\
&=\langle y_i^\loz-y_j^\loz\,,\overrightarrow{y_jy_i}\rangle\geq0;
\end{align*}
i.e.,
\begin{align}\label{ineqm00}
 a_{ii}+a_{jj}\geq a_{ij}
+a_{ji}.
\end{align}
Applying \eqref{ineqm} and  \eqref{ineqm00}, we obtain:
\begin{align} \nonumber
0& \leq \sum_{i,j}^n\psi_i(w^\loz)\psi_j(w^\loz) a_{ij}\\ \nonumber
&=\sum^n_{i<j}\psi_i(w^\loz)\psi_j(w^\loz) a_{ij}+\sum_{i=1} ^n \psi_i(w^\loz)^2a_{ii}+\sum^n_{i>j}\psi_i(w^\loz)\psi_j(w^\loz) a_{ij}\\
&=\sum_{i=1}^n \psi_{i}(w^\loz)^2a_{ii}+\sum^n_{i<j}\psi_i(w^\loz)\psi_j(w^\loz) (a_{ij}+a_{ji})\\
&\leq \sum^n_{i=1} \psi_i(w^\loz)^2a_{ii}+\sum^n_{i<j}\psi_i(w^\loz)\psi_j(w^\loz) (a_{ii}+a_{jj}). \label{ineqs}
\end{align}
Set $I(w^\loz):=\big\{i\in \{1,\ldots,n\}:w^\loz \in U_i \big\}$. Applying property (iii) of the partition of unity  in \eqref{ineqs} we get:
\begin{align}
0\leq \sum_{i \in I(w^\loz)} \psi_i(w^\loz)^2a_{ii}+\mathop{\sum_{i<j}}_{i,j \in I(w^\loz)}\psi_i(w^\loz)\psi_j(w^\loz) (a_{ii}+a_{jj}). \label{ijj}
\end{align}%\label{ineqa}
%Using by definition $a_{ii}<0$ if and only if $w^\loz \in U_i$. According to (ii), $\psi_i(w^\loz) >0$, therefore all terms in the right-hand side of \eqref{ijj} are nonpositive, hence all of $\psi_i(w^\loz)$ must be vanish, this contradicts to (i). This complete the proof.
By using property (iii) of the partition of unity and the definition of $U_i$, one deduce that all terms in the right-hand side of \eqref{ijj} are nonpositive. So all of $\psi_i(w^\loz)$'s must be vanish, which contradicts with (i).
\end{proof}

\begin{corollary} Let $X$ be a flat Hadamard space and  $M \subseteq X \times X^\loze$ be a monotone set. Let $C \subseteq X^\loze$ be a compact and convex set, and $\varphi: C \rightarrow X$ be a continuous function. Then there exists $z^\loz \in C$ such that $\{(\varphi(z^\loz ),z^\loz )\} \cup M$ is monotone.
\end{corollary}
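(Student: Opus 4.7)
The plan is to deduce this corollary immediately from the previously established Theorem \ref{dflr} by matching hypotheses. All the data of the theorem appear verbatim in the statement of the corollary---$M$ is monotone, $C \subseteq X^\loze$ is compact and convex, and $\varphi : C \rightarrow X$ is continuous---so my only real task is to produce, for free, the missing hypothesis that $M$ satisfies the $\mathcal{W}$-property.

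The key observation is Proposition \ref{frtg}, which among other things asserts the equivalence of (i) ``$X$ is a flat Hadamard space'' and (iv) ``any subset of $X \times X^\loze$ has the $\mathcal{W}$-property''. So I would simply appeal to (i) $\Rightarrow$ (iv) to conclude that the given $M$, being a subset of $X \times X^\loze$, automatically enjoys the $\mathcal{W}$-property. With that in hand, every hypothesis of Theorem \ref{dflr} is met, and the theorem produces the desired $z^\loz \in C$ such that $\{(\varphi(z^\loz),z^\loz)\} \cup M$ is monotone.

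Since all the nontrivial work---continuity of the auxiliary function, the partition-of-unity construction, and the Brouwer fixed point step---has already been absorbed into the proof of Theorem \ref{dflr}, there is essentially no obstacle here. The only conceptual point worth highlighting is that flatness is genuinely the right hypothesis to replace $\mathcal{W}$: outside the flat setting one cannot expect an arbitrary monotone $M \subseteq X \times X^\loze$ to satisfy $\mathcal{W}$, as Example \ref{ExMonS} already exhibits on an $\mathbb{R}$-tree that fails to be flat. So the whole proof is just the chain ``flat $\Rightarrow$ $\mathcal{W}$-property for $M$ $\Rightarrow$ apply Theorem \ref{dflr}''.
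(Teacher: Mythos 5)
Your proposal is correct and coincides exactly with the paper's own proof: invoke Proposition \ref{frtg} ((i) $\Rightarrow$ (iv)) to get the $\mathcal{W}$-property for $M$ from flatness, then apply Theorem \ref{dflr}. Nothing is missing.
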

\begin{proof} Since $X$ is flat, it follows from Proposition \ref{frtg} that $M \subseteq X\times X^\loze$ has $\mathcal{W}$-property. The inclusion follows from Theorem \ref{dflr}.
\end{proof}

\end{document}